\documentclass[12pt]{article}
\usepackage[left=1in,top=1in,right=1in,bottom=1in]{geometry}

\usepackage[english]{babel}
\usepackage{amsmath,amsthm,amsfonts,amssymb,epsfig,xcolor,authblk, hyperref}
\usepackage{bbm,booktabs,float,mathtools,siunitx,tikz,ulem}

\newtheorem{thm}{Theorem}[section]
\newtheorem{lem}[thm]{Lemma}
\newtheorem{prop}[thm]{Proposition}
\newtheorem{cor}[thm]{Corollary}

\newcommand{\E}{\mathbf{E}}

\newcommand{\PP}{\mathbf{P}}

\newcommand{\R}{\mathbb{R}}
\newcommand{\Z}{\mathbb{Z}}
\newcommand{\N}{\mathbb{N}}
\newcommand{\e}{\varepsilon}
\newcommand{\ii}{{\rm i}}
\newcommand{\oo}{{\rm o}}
\newcommand{\rd}{{\rm d}}
\newcommand{\m}{\text{\rm meas}}

\DeclareMathOperator{\OO}{O}

\newcommand{\re}{{\rm Re}}

\newcommand{\1}{\mathbf 1}

\numberwithin{equation}{section}

\title{\Large Conditional Upper Bounds for Large Deviations and Moments of the Riemann Zeta Function}

\author[1,2]{\normalsize Louis-Pierre Arguin}
\author[3]{\normalsize Emma Bailey}
\author[3]{\normalsize Asher Roberts}

\affil[1]{\footnotesize  \it Department of Mathematics, Baruch College and Graduate Center, City University of New York, NY}
\affil[2]{\footnotesize  \it Mathematical Institute, University of Oxford, Oxford, UK}
\affil[3]{\footnotesize  \it Department of Mathematics, University of Bristol, Bristol, UK}
\affil[4]{\footnotesize  \it Department of Mathematics, St. Joseph's University, New York, NY}

\date{April 28, 2026}

\begin{document}

\maketitle
\begin{abstract}
Assuming the Riemann Hypothesis, we show that for $k>0$
$$
\frac{1}{T}\m\Big\{t\in [T,2T]:|\zeta(1/2+\ii t)|>(\log T)^k\Big\}\leq C_k \frac{(\log T)^{-k^2}}{\sqrt{\log\log T}},
$$
where $C_k=\exp(e^{ck})$ for some absolute constant $c>0$.
This implies that the $2k$-moments of $|\zeta|$ are bounded above by $C_k(\log T)^{k^2}$, recovering
the bound of \cite{Har13}.
The proof relies on the recursive scheme of \cite{ArgBouRad20}, and combines ideas of \cite{Sou09} and \cite{Har13}. 
\end{abstract}


\section{Introduction}

\subsection{Main Result}
The Riemann zeta function is defined in the region $\re \ s>1$ of $\mathbb C$ by 
\begin{equation}
\zeta(s)=\sum_{n\geq 1}n^{-s}. 
\end{equation}
It admits a meromorphic continuation to the complex plane with a pole at $s=1$ and satisfies the functional equation $\zeta(s)=2^s\pi^{s-1}\sin(\pi s/2) \Gamma(1-s)\zeta(1-s)$. 
The function thus admits trivial zeros at negative even integers. The Riemann Hypothesis asserts that all other zeros must lie on the critical line $\re \ s=1/2$.

This paper is concerned with the distribution of large values of the zeta function occurring on the critical line. 
The main result is an upper bound on the measure of level sets of $\zeta$ in the interval $[T,2T]$ for values of the order $\log\log T$, conditionally on the Riemann Hypothesis:
\begin{thm}
\label{thm: main}
Assume the Riemann Hypothesis is true. Let $k>0$ and $V=V(T)\sim k \log\log T$. We have for $T$ large enough
\[
\frac{1}{T}\m\Big\{t\in [T,2T]:\log|\zeta(1/2+\ii t)|>V\Big\}\leq C_k \frac{e^{-V^2/\log\log T}}{\sqrt{\log\log T}},
\]
where $C_k= \exp(e^{ck})$ for some absolute constant $c>0$.
\end{thm}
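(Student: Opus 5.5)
We follow the recursive scheme of \cite{ArgBouRad20}, combining Soundararajan's upper bound \cite{Sou09} with Harper's splitting of the prime sum \cite{Har13}. Write $L=\log\log T$.

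\textbf{Step 1: upper bound by a prime sum.} Under RH, Soundararajan's inequality gives, for $2\le x\le T^2$ and $t\in[T,2T]$,
\[
\log|\zeta(1/2+\ii t)|\le \re\sum_{p\le x}\frac{1}{p^{1/2+1/\log x+\ii t}}\frac{\log(x/p)}{\log x}+\frac{\log T}{\log x}+\OO(1),
\]
where the prime-square contribution is absorbed into $\OO(1)$ after restricting to a good set. We use this with a random choice of $x$. Fix scales $T_\ell=\exp\big((\log T)/e^{\ell}\big)$ (or a doubly exponential variant near the top), so that $\log\log T_\ell=L-\ell$. For each $t$, let $\ell$ be the first index at which the truncated sum $S_\ell(t)$ over $p\le T_\ell$ is \emph{too large} relative to a barrier $V(L-\ell)/L+$ a margin. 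If no barrier is crossed up to $\ell^*\approx\log(kL)$ scales from the top, we use $x=T_{\ell^*}^{1/(ck)}$, for which $\log T/\log x=\OO(k)$. Otherwise we use $x=T_\ell$, where the error $\log T/\log T_\ell=e^{\ell}$ is paid for by the smallness of the event $\{S_\ell\text{ large}\}$.

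\textbf{Step 2: moment bounds on each event.} The event $\{\log|\zeta|>V\}$ is covered by the following two parts.
\begin{itemize}
\item[(a)] The union over $\ell$ of $\{S_j\ \text{stays below barrier for } j<\ell,\ S_\ell\ \text{exceeds it}\}$.
\item[(b)] The event that the barrier is never crossed and $\sum_{p\le x}(\cdots)>V-\OO(k)$.
\end{itemize}
Each is bounded by Markov with high even moments of Dirichlet polynomials of length $x^{2q}\le T$. These moments are computed by mean-value theorems as Gaussian moments with variance $\tfrac12\sum p^{-1}$. The choice $q\approx V/\log\log x$ is admissible since $x^{2q}\le T$. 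In (a), the exceedance costs roughly $e^{-q\cdot(\text{gap})}$ beyond the Gaussian tail, which beats $e^{\ell}$. Summing over $\ell$ gives $C_k e^{-V^2/L}$ times a decaying factor.

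\textbf{Step 3: gaining $1/\sqrt{L}$.} This is the main obstacle. Plain Markov gives $e^{-V^2/L}$ without the $\sqrt L$ saving. We must instead get a sharp Gaussian tail $\PP(\mathcal N(0,L/2)>V)\asymp e^{-V^2/L}/\sqrt L$. The plan, following Harper, is to split the primes into blocks $(T_{\ell+1},T_\ell]$, which behave as independent, and replace high moments of the full sum with products of moments of block sums times indicators. Concretely, we bound $\E[\prod_\ell F_\ell]$, where $F_\ell$ is a truncated Taylor polynomial of $e^{\lambda S^{(\ell)}}$ with $\lambda=2V/L$, valid on the good event where block sums are bounded. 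The exponential-tilt measure makes the total sum approximately Gaussian with mean $V$. A local limit computation, or a Berry--Esseen estimate for the tilted sum on a window $[V,V+1]$ that is then summed over unit windows, recovers the $1/\sqrt{L}$ factor. Independence across blocks comes from the mean-value theorem, since the products are Dirichlet polynomials of length $\le T^{1/2}$.

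\textbf{Step 4: the constant.} We track the losses. The top $\ell^*\approx\log k$ scales each contribute factors like $e^{\OO(k)}$ and $e^{e^{\ell}}$ over $\ell\le c'\log k$, giving $C_k=\exp(e^{ck})$.
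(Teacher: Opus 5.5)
There is a genuine gap, and it sits exactly where the paper places its main new input. Your scheme is Harper's device for moments: stop at the first scale where the walk is too \emph{high}, truncate Soundararajan's bound there, and pay $\log T/\log T_\ell$ with the rarity of the exceedance. But Step 3 runs on ``the good event where block sums are bounded''. The mechanism that actually delivers $1/\sqrt{\log\log T}$ (Lemma~\ref{lem: 1 point}) needs this confinement to be two-sided and \emph{narrow}. Each increment is localized to windows of width $\Delta_j^{-1}$ with $|u_j|\le 4\Delta_j$ and $\Delta_j=c_j=\beta_j^{-\gamma}$, using polynomials of degree $\approx\Delta_j^{20}$; this is admissible only because $\gamma<1/20$. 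The approximation errors are then relative, increment by increment, and the $1/\sqrt{\log\log T}$ comes from the first block.

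With such narrow barriers, the event $\{S_\ell^{(j)}<\kappa\log\log T_\ell-c_\ell\}$ is \emph{more} likely than the target bound, so it must be killed using $H$ itself. Your events (a) and (b) never treat it. Your short truncation cannot handle it either: at $x=T_\ell$ Soundararajan's error is $\asymp\beta_\ell^{-1}$, far larger than the deficit $V-u\le\kappa\log\beta_\ell^{-1}+4c_\ell$, so $H$ is compatible with $S_\ell\approx u$. The missing idea (Lemma~\ref{lem: LB}) is to truncate \emph{longer}, at $x=T^{\beta_m}$ with $\beta_m\approx c_\ell^{-1}$, so that the error $\tfrac34\beta_m^{-1}<c_\ell\le V-u$. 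Then $H$ forces $S_m^{(m)}-S_\ell^{(j)}\ge (V-u)/5$. Splitting at $S_\ell^{(m)}$ with $W=(V-u)/10$:
the increment part $S_m^{(m)}-S_\ell^{(m)}\ge W$ costs $e^{-\frac1{10}W\log W}$ via a moment of order $q\asymp W$, admissible since $2q\beta_m<1/2$;
the abscissa mismatch $S_\ell^{(m)}-S_\ell^{(j)}\ge W$ is handled by the two-point estimate of Lemma~\ref{lem: 2 point}, which is why the good events constrain all $j\ge\ell$ at once.
Upward exits, by contrast, need neither $H$ nor any truncation, only Lemma~\ref{lem: 1 point}.

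If you instead keep Harper's wide bounds $\beta^{-3/4}$, downward excursions are rare on their own, but then Step 3 has no mechanism. A window on the full sum is not a short Dirichlet polynomial. Taylor-truncating $e^{(\lambda+\ii\xi)S^{(\ell)}}$ under a Fourier window leaves additive errors at the coarsest blocks of size $e^{-\OO_k(1)}$. These errors do not decay with $T$ and are not localized to the window, so they lose precisely the $\sqrt{\log\log T}$ you are after.

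Several quantitative claims are also wrong. With $q\approx V/\log\log x\asymp k$, Markov only saves a power of $\log\log T$. A Gaussian tail at height $V$ needs $q\asymp V^2/\log\log T\asymp k^2\log\log T$, which forces $\log T/\log x\gtrsim k^2\log\log T$ (Soundararajan's lossy regime). So Step 2(b) fails even before the $\sqrt{\log\log T}$ issue. With $T_\ell=\exp(\log T/e^\ell)$ and $\ell^*\approx\log(k\log\log T)$, your $x=T_{\ell^*}^{1/(ck)}$ gives $\log T/\log x\asymp k^2\log\log T$, not $\OO(k)$. In any case $\OO(k)$ is incompatible with the crossing arguments, which require the last scale to satisfy $\beta_{\mathcal L}\le e^{-ck}$; in the paper, $c_{\mathcal L}>e^{200k}$ so that $e^{2k|\bar u|-\frac1{100}|\bar u|\log|\bar u|}$ decays. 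That constraint, combined with the threshold shift $\asymp\beta_{\mathcal L}^{-1}$ on the final event $H\cap G_{\mathcal L}$ (costing $e^{\OO(\kappa\beta_{\mathcal L}^{-1})}$), is the real source of $C_k=\exp(e^{ck})$. Your Step 4 count ($e^{e^\ell}$ over $\ell\le c'\log k$) would give $e^{k^{c'}}$, so it misses the constraint that actually drives the constant.
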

An upper bound of this form was proved unconditionally on the range $0<k<2$ in \cite{ArgBai23}.
A matching lower bound with constant $(k^2\log k)^{-k^2+\oo(1)}$ is proved in \cite{ArgCre26} for all $k>0$. 
These results are to be contrasted with Selberg's Central Limit Theorem which states that for any $v\in \R$
$$
\lim_{T\to\infty}\frac{1}{T}\m\Big\{t\in [T,2T]:\log|\zeta(1/2+\ii t)|>v\sqrt{\tfrac{1}{2}\log\log T}\Big\}=\int_v^{\infty} \frac{e^{-x^2/2}}{\sqrt{2\pi}}\rd x.
$$
Theorem \ref{thm: main} and the corresponding lower bound show that Gaussian fluctuations extend to large deviations for the whole range of order $\log\log T$, up to constant.
The interest for this regime of values stems from its relation to the moments. 
In the seminal paper \cite{Sou09}, Soundararajan proved the following bound for $k\geq 0$ under the Riemann Hypothesis:
\begin{equation}
\label{eqn: sound m}
\frac{1}{T}\m\Big\{t\in [T,2T]: |\zeta(1/2+\ii t)|>(\log T)^k\Big\}=(\log T)^{-k^2+\oo(1)}.
\end{equation}
Since the measure of level sets and the $2k$-moments satisfy the relation
\begin{equation}
\label{eqn: IBP}
\int_T^{2T} |\zeta(1/2+\ii t)|^{2k}\rd t
=2k\int_{-\infty}^\infty e^{2kV} \m\Big\{t\in [T,2T]: |\zeta(1/2+\ii t)|>e^V\Big\}\rd V,
\end{equation}
the bound \eqref{eqn: sound m} then implies that the $2k$-moment is $\leq (\log T)^{k^2+\e}$ for any fixed $\e>0$, with the dominant contribution coming from $V$ around $k\log\log T$.
In the breakthrough paper \cite{Har13}, Harper sharpened this bound to
$$
\frac{1}{T}\int_T^{2T} |\zeta(1/2+\ii t)|^{2k}\rd t\leq e^{e^{\OO(k)}} (\log T)^{k^2}.
$$
(The effective constant can be taken to be $e^{e^{18.63k}}$ as shown in \cite{Tao24}.)
Theorem \ref{thm: main} and the relation \eqref{eqn: IBP} imply Harper's bound:
\begin{cor}
\label{cor: moments}
Assume the Riemann Hypothesis is true. Let $k\geq 2$. We have for $T$ large enough
$$
\frac{1}{T}\int_T^{2T} |\zeta(1/2+\ii t)|^{2k}\rd t\ll e^{e^{ck}}(\log T)^{k^2},
$$
for the same absolute constant $c$ as Theorem \ref{thm: main}.
\end{cor}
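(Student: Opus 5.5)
We derive the moment bound from Theorem \ref{thm: main} through the layer-cake identity \eqref{eqn: IBP}. Write $L=\log\log T$. We split the $V$-integral in \eqref{eqn: IBP} into three ranges: $V\le (k-\delta)L$, $|V-kL|\le \delta L$, and $V\ge (k+\delta)L$, with $\delta>0$ a small fixed parameter depending only on $k$.

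\emph{The central range.} For $|V-kL|\le\delta L$ we apply Theorem \ref{thm: main} with $k'=V/L$. Since $k'\in[k-\delta,k+\delta]$, we have $C_{k'}\le \exp(e^{c(k+\delta)})$. We need the constant of Theorem \ref{thm: main} to be uniform for $V/L$ in a compact set, rather than valid only for a fixed sequence $V\sim k'L$. This uniformity should follow from the proof; if necessary, we can discretize $V$ on a mesh of width $1$ and use monotonicity of the level sets in $V$. The resulting contribution is
\[
2k\int_{|V-kL|\le\delta L} e^{2kV}\,\frac{C\,e^{-V^2/L}}{\sqrt L}\,\rd V .
\]
The exponent $2kV-V^2/L$ is maximized at $V=kL$, where it equals $k^2L$. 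Hence the integrand equals $e^{k^2L}e^{-(V-kL)^2/L}/\sqrt L$, and the Gaussian integral over $V$ contributes $\sqrt{\pi L}$. The factor $\sqrt{L}$ cancels the $1/\sqrt L$, leaving $\ll k\,e^{e^{c(k+\delta)}}(\log T)^{k^2}$. After enlarging $c$ slightly, this has the form $e^{e^{ck}}$ for $k\ge 2$.

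\emph{The lower range.} For $V\le (k-\delta)L$ we use the trivial bound $\m\le T$ when $V\le 0$. For $0<V\le (k-\delta)L$ we use Theorem \ref{thm: main} with the parameter $V/L\in(0,k]$. If small parameters cause trouble, we instead use Soundararajan's bound \eqref{eqn: sound m}, or simply $\m\ll T e^{-V^2/L}$ together with uniformity of $C$ for parameters at most $k$. The integrand is then at most $e^{k^2L-\delta^2L}$, and the $V$-integral over a range of length $O(L)$ gives $O(L\,e^{-\delta^2 L})(\log T)^{k^2}=o((\log T)^{k^2})$.

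\emph{The upper range.} For $V\ge(k+\delta)L$ there are two subranges. When $V\le AL$ for a large constant $A$, Theorem \ref{thm: main} again gives Gaussian decay $e^{-(V-kL)^2/L}$, with constants $C_{V/L}$ bounded in terms of $A$. When $V> AL$ we cannot appeal to the theorem. Here we use Soundararajan's estimate, which under RH gives $\m\{\log|\zeta|>V\}\ll T\exp(-cV\log V)$ for $V\ge L^2$ and $T e^{-V^2/L(1+o(1))}$ in intermediate ranges. We also use the conditional pointwise bound $\log|\zeta(1/2+\ii t)|\ll \log T/\log\log T$, which truncates the integral. Both tails are negligible once $A$ is taken large compared with $k$.

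The main obstacle is uniformity. Theorem \ref{thm: main} is stated for a fixed $k$, with $C_k$ increasing in $k$, while the integral needs the bound simultaneously for all $V$ in a window of width about $\sqrt L$ around $kL$, and with a controlled constant somewhat beyond it. We would first check that the proof of Theorem \ref{thm: main} gives a constant monotone in $k$ and uniform over $V/L$ in compacts. The far tail is then handled by Soundararajan's bound, and the hypothesis $k\ge2$ is used to absorb polynomial factors in $k$ into $e^{e^{ck}}$.
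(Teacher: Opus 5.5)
Your proposal is correct and is essentially the paper's argument. The paper simply defers to the proof of \cite[Corollary 1.2]{ArgBai23}: the layer-cake identity \eqref{eqn: IBP}, with Theorem \ref{thm: main} supplying the Gaussian window around $V=k\log\log T$ and cruder bounds (here Soundararajan's) handling the ranges away from it. The uniformity you flag follows directly from the ``$V(T)\sim k\log\log T$, $T$ large enough'' formulation by a diagonal-sequence argument, which gives the bound with the same $C_k$ uniformly for $|V/\log\log T-k|\le\epsilon_k$. No enlargement of $c$ is then needed, since the complement of that window contributes $o((\log T)^{k^2})$; a width-one mesh plus monotonicity would not by itself give this uniformity.
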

The proof is identical to \cite[Corollary 1.2]{ArgBai23} with Theorem \ref{thm: main} as the new input.
Unconditionally, upper bounds for $0\leq k\leq 2$ were proved in \cite{HeaRadSou19}, as well as lower bounds for the whole range $k\geq 0$ in \cite{HeaSou22}.
The {\it Moments Conjecture} of Keating \& Snaith  \cite{KeaSna00} predicts that the moments are asymptotic to $a_kg_k (\log T)^{k^2}$ with explicit arithmetic constant $a_k$ and geometric constant $g_k$. 
Only the cases $k=1$ and $k=2$ are known from the classical works of Hardy \& Littlewood \cite{harlit18} and Ingham~\cite{ing26}. 
As argued in \cite{Rad11}, the optimal $C_k$ in Theorem \ref{thm: main} should also be  $a_kg_k$. 
Note that as $k\to\infty$, we have the asymptotic behavior (see for example \cite{ConGon01, HiaRub11})
\begin{equation}
\label{eqn: KS}
a_k=(\log k)^{-k^2+\oo(1)},\qquad g_k=k^{-k^2+\oo(1)}.
\end{equation}
Hence the constant in Theorem \ref{thm: main} is still far off the predicted value as it should decay to $0$ sharply as $k\to\infty$.
For more information on the moments problem, we refer to the survey of Florea \cite{Flo26}, and to the review paper of Soundararajan  \cite{Sou23} for the broader question of the distribution of values of $L$-functions. In Section \ref{sect: short}, we discuss the consequences of Theorem \ref{thm: main} to the statistics of large values on short intervals of the critical line.\\

As in \cite{Sou09} and \cite{Har13}, the proof of Theorem \ref{thm: main} relies on the following conditional pointwise bound of Soundararajan:
\begin{prop}[\cite{Sou09}]
\label{prop: Sound}
Assume the Riemann Hypothesis is true. Let $T$ be large enough and $t\in [T,2T]$. Consider $2\leq x\leq T^2$. Let $\lambda_0=0.4912\dots$ denote the unique positive real solution to $e^{-\lambda}=\lambda+\lambda^2/2$. For all $\lambda\geq \lambda_0$ we have
\begin{equation}
\label{eqn: sound bound}
\log|\zeta(1/2+\ii t)|\leq \re\ \sum_{n\leq x}\frac{a(n)\Lambda(n)}{n^{\sigma+\ii t}\log n}+\frac{(1+\lambda)}{2}\cdot\frac{\log T}{\log x} +\OO\Big(\frac{1}{\log x}\Big),
\end{equation}
where $a(n)=\frac{\log (x/n)}{\log x}$ and $\sigma=\sigma(x,\lambda)=\frac{1}{2}+\frac{\lambda}{\log x}$. 
\end{prop}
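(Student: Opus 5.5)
This is Soundararajan's pointwise bound, and I will follow his argument through the Hadamard product and an explicit formula for $\zeta'/\zeta$.

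\textbf{Step 1: an explicit formula.} Start from Selberg's identity for $s$ not a zero or pole and $x\ge2$:
$$
-\frac{\zeta'}{\zeta}(s)=\sum_{n\le x}\frac{\Lambda(n)}{n^s}\frac{\log(x/n)}{\log x}+\frac{1}{\log x}\Big(\frac{\zeta'}{\zeta}\Big)'(s)+\frac{1}{\log x}\sum_{\rho}\frac{x^{\rho-s}}{(\rho-s)^2}+\text{(pole/trivial zero terms)}.
$$
To prove it, integrate $-\frac{\zeta'}{\zeta}(s+w)\frac{x^w}{w^2}$ over a vertical line far to the right and shift the contour to the left, picking up residues.

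Next integrate this identity in $\sigma$ from $\sigma_0=\tfrac12+\lambda/\log x$ to $\infty$ and take real parts. This gives $\log|\zeta(\sigma_0+\ii t)|$ as
$$
\re\sum_{n\le x}\frac{a(n)\Lambda(n)}{n^{\sigma_0+\ii t}\log n}-\frac{1}{\log x}\re\frac{\zeta'}{\zeta}(\sigma_0+\ii t)+\frac{1}{\log x}\sum_\rho \re\int_{\sigma_0}^\infty\frac{x^{\rho-s}}{(\rho-s)^2}\rd\sigma+\OO\Big(\frac{1}{\log x}\Big).
$$
Under RH we have $|x^{\rho-s}|=x^{1/2-\sigma}$, so each zero term is bounded by $\frac{x^{1/2-\sigma_0}}{\log x\,|\sigma_0+\ii t-\rho|^2}$.

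\textbf{Step 2: controlling the zero sum and $\zeta'/\zeta$.} The Hadamard product gives
$$
\re\frac{\zeta'}{\zeta}(s)=\tfrac12\log T\,(1+\oo(1))-F(s),\qquad F(s)=\sum_\rho\frac{\sigma-1/2}{(\sigma-1/2)^2+(t-\gamma)^2}\ge0.
$$
Since $\sigma_0-\tfrac12=\lambda/\log x$, the zero sum is at most $\frac{e^{-\lambda}}{\lambda}F(\sigma_0+\ii t)$. The first contribution is therefore $\frac{1}{\log x}\big(-\tfrac12\log T+F\big)$, and altogether
$$
\log|\zeta(\sigma_0+\ii t)|\le \re\sum_{n\le x}\frac{a(n)\Lambda(n)}{n^{\sigma_0+\ii t}\log n}+\frac{F(\sigma_0+\ii t)}{\log x}\Big(1+\frac{e^{-\lambda}}{\lambda}\Big)-\frac{\log T}{2\log x}+\OO\Big(\frac{1}{\log x}\Big).
$$
As written this bound looks off, so I will instead work directly with the exact identity and keep the $F$ terms explicit.

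\textbf{Step 3: passing from $\sigma_0$ to $1/2$.} RH makes $\log|\zeta(\sigma+\ii t)|$ essentially monotone, with
$$
\log|\zeta(\tfrac12+\ii t)|-\log|\zeta(\sigma_0+\ii t)|=-\int_{1/2}^{\sigma_0}\re\frac{\zeta'}{\zeta}(\sigma+\ii t)\,\rd\sigma .
$$
Each zero contributes $\tfrac12\log\frac{(\sigma_0-1/2)^2+(t-\gamma)^2}{(t-\gamma)^2}$. Using $\log(1+u)\le u$, this is at most $\frac{(\sigma_0-1/2)}{2}F(\sigma_0+\ii t)\cdot(\ldots)$, and the sum over zeros equals $\frac{\lambda}{2\log x}F+\ldots$. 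The main term contributes $+\frac{\lambda}{\log x}\cdot\tfrac12\log T$ once the sign is arranged.

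Collecting everything, the coefficient of $F/\log x$ is $-1+\frac{\lambda}{2}\cdot(\ldots)+\frac{e^{-\lambda}}{\lambda}$ after normalizing. More precisely, Soundararajan obtains the coefficient $\frac{\lambda}{2}+\frac{\lambda^2}{4}\cdot\ldots$ from comparing $\log(1+u)$ with $u/(1+u)$-type bounds. The condition $e^{-\lambda}\le\lambda+\lambda^2/2$ is exactly what makes this coefficient nonpositive, so the $F$ term can be dropped. The $\log T$ terms then combine to $\frac{(1+\lambda)}{2}\frac{\log T}{\log x}$.

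\textbf{Main obstacle.} The main obstacle is the bookkeeping in Step 3: bounding each zero's contribution sharply enough that the total coefficient of $F$ is $\le0$ exactly when $\lambda\ge\lambda_0$. The other steps are standard contour integration and Stirling estimates.
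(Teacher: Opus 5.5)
The paper does not prove this proposition; it quotes it from Soundararajan. Your outline (explicit formula, integration in $\sigma$, Hadamard product, passage from $\sigma_0$ to $\frac12$) is exactly his argument. As written, however, it does not prove the bound. The two estimates that fix the sign of the $F$-term are wrong, and the decisive bookkeeping is left as ellipses.

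\textbf{First error: the sign in the Hadamard formula.} Since $\re\,\frac{\xi'}{\xi}(s)=F(s)$ for the completed zeta function $\xi$, Stirling gives
\[
\re\frac{\zeta'}{\zeta}(s)=F(s)-\frac12\log\frac{t}{2\pi}+\OO(1/t).
\]
So the zeros enter with a plus sign and $\log T$ with a minus sign, the reverse of what you wrote. Your $\frac12\log T\,(1+\oo(1))$ is also too weak: an $\oo(\log T)$ error divided by $\log x$ is not $\OO(1/\log x)$. You need the $\OO(1)$ error; for $t\le 2T$ you can simply use $\frac12\log\frac{t}{2\pi}\le\frac12\log T$. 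Hence $-\frac{1}{\log x}\re\frac{\zeta'}{\zeta}(\sigma_0+\ii t)$ contributes $\frac{1}{\log x}\big(\frac12\log T-F(\sigma_0+\ii t)\big)$, and Step 2 should read
\[
\log|\zeta(\sigma_0+\ii t)|\le \re\sum_{n\le x}\frac{a(n)\Lambda(n)}{n^{\sigma_0+\ii t}\log n}+\frac{\log T}{2\log x}+\Big(\frac{e^{-\lambda}}{\lambda}-1\Big)\frac{F(\sigma_0+\ii t)}{\log x}+\OO\Big(\frac{1}{\log x}\Big).
\]
That is, the coefficient of $F/\log x$ is $\frac{e^{-\lambda}}{\lambda}-1$ rather than $1+\frac{e^{-\lambda}}{\lambda}$, and the main term is $+\frac{\log T}{2\log x}$ rather than $-\frac{\log T}{2\log x}$.

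\textbf{Second error: the inequality in Step 3 goes the wrong way.} With the correct formula,
\[
\log|\zeta(\tfrac12+\ii t)|-\log|\zeta(\sigma_0+\ii t)|=\int_{1/2}^{\sigma_0}\Big(\frac12\log\frac{t}{2\pi}-F(\sigma+\ii t)+\OO(1/t)\Big)\rd\sigma .
\]
The zero contribution is $\int_{1/2}^{\sigma_0}F(\sigma+\ii t)\rd\sigma=\sum_\rho\frac12\log(1+u_\rho)$, with $u_\rho=(\sigma_0-\frac12)^2/(t-\gamma)^2$. It enters with a \emph{minus} sign, so you need a lower bound for it. The inequality $\log(1+u)\le u$ gives an upper bound, which is useless here. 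It also does not produce $F(\sigma_0+\ii t)$, since its denominators are $(t-\gamma)^2$. The right tool is $\log(1+u)\ge u/(1+u)$, which gives $\int_{1/2}^{\sigma_0}F(\sigma+\ii t)\rd\sigma\ge\frac{\sigma_0-1/2}{2}F(\sigma_0+\ii t)$. Hence
\[
\log|\zeta(\tfrac12+\ii t)|\le\log|\zeta(\sigma_0+\ii t)|+\frac{\lambda}{2}\cdot\frac{\log T}{\log x}-\frac{\lambda}{2}\cdot\frac{F(\sigma_0+\ii t)}{\log x}+\OO\Big(\frac{1}{\log x}\Big).
\]

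\textbf{Closing the argument.} Adding the two displays, the coefficient of $F(\sigma_0+\ii t)/\log x$ is $\frac{e^{-\lambda}}{\lambda}-1-\frac{\lambda}{2}$. This is $\le 0$ exactly when $e^{-\lambda}\le\lambda+\frac{\lambda^2}{2}$, i.e.\ for $\lambda\ge\lambda_0$, since $e^{-\lambda}-\lambda-\frac{\lambda^2}{2}$ is decreasing. As $F(\sigma_0+\ii t)\ge0$ under RH, this term can be dropped, and the $\log T$ terms combine to $\frac{1+\lambda}{2}\cdot\frac{\log T}{\log x}$. This short computation is the entire ``main obstacle'' you left open. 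With the signs in your proposal (coefficient $1+\frac{e^{-\lambda}}{\lambda}$ in Step 2 and $+\frac{\lambda}{2}$ in Step 3) it cannot close.
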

The main challenge in estimating with good accuracy the moments or the large deviations is to take $x$ large in the above bound, while maintaining sufficient control of the Dirichlet polynomial to prove Gaussian behavior.
In \cite{Sou09}, this was done up to $x=T^{\frac{1}{2k^2}\frac{\log_3T}{\log_2T}}$, and $x=T^{e^{-1000k}}$ in \cite{Har13}. 
As explained in the next section, we rely on a random walk heuristic based on the recursive scheme of \cite{ArgBouRad20} which allows for a control of the partial sums up to a threshold similar to  \cite{Har13}.
The asymptotic \eqref{eqn: KS} for $g_k$ would be a consequence of Gaussian fluctuations in the range of primes between $T^{1/k^2}$ and $T^{1/k}$, but this seems currently out of reach.

\subsection{The Recursive Scheme}
Throughout the paper, we will use the probabilistic notation where $\tau$ denotes a random variable uniformly distributed on $[T,2T]$, with $\PP$ standing for the corresponding distribution and $\E$, for the expectation.
We will also follow the probabilistic convention of omitting the dependence on $\tau$ in random variables and events. 
In this notation, the proof of Theorem \ref{thm: main} amounts to estimating
for fixed positive $k$ and $V\sim k\log\log T$, the probability $\PP(H)$ of the event
$$
H=\{\log |\zeta (1/2+\ii \tau)|>V\}.
$$

The recursive scheme of \cite{ArgBouRad20} consists of decomposing $H$ into decreasing good events $G_\ell$, $\ell\geq 1$, so that
\begin{equation}
\label{eqn: recursive scheme}
\PP(H)=\sum_{\ell=1}^{\mathcal L+1} \PP(H\cap G_{\ell-1}\setminus G_{\ell}),
\end{equation}
with the convention that $G_0=[T,2T]$ and $G_{\mathcal L+1}=\emptyset$. 
The events restrict the values of the partial sums at checkpoints indexed by $1\leq \ell\leq \mathcal L$ defined by
$T_0=1$, $T_1=\exp(\sqrt{\log T})$ and
\begin{equation}
\label{eqn: Tell}
T_\ell=T^{\beta_\ell},\quad \beta_\ell=\frac{e^{\ell-1}}{\sqrt{\log T}}, \quad 1\leq \ell\leq \mathcal L, 
\end{equation}
with
$$
\mathcal L=1+\max\{\ell:\beta_\ell\leq e^{-10^4 k}\}.
$$
With Proposition \ref{prop: Sound} in mind, we define for $\lambda=1/2$, $1\leq j,\ell\leq \mathcal L$, the partial sums
\begin{equation}
\label{eqn: S}
S_\ell^{(j)}=\re \sum_{p\leq T_\ell}\frac{a_{j}(p)}{p^{\sigma_j+\ii \tau}}+\re \frac{1}{2}\sum_{p^2\leq T_\ell}\frac{a_{j}(p^2)}{p^{2\sigma_j+2\ii \tau}},
\end{equation}
where
$$
\sigma_j=\frac{1}{2}+\frac{\lambda}{\log T_j}\qquad a_j(n)=\frac{\log (T_j/n)}{\log T_j}.
$$
The moments of $S_\ell^{(j)}$ are estimated in Appendix \ref{sect: moments}. In particular, the variance is approximately $\frac{1}{2}\log\log T_\ell$.
It is expected that the random phases $p^{-i\tau}$ become essentially independent in the limit of large $T$ for distinct primes, thus that the partial sums ranging in $\ell$ behave like random walks with time index $t_\ell=\log \log T_\ell$.
Note that each abscissa $\sigma_j$ gives rise to a random walk $(S_\ell^{(j)}, 1\leq \ell\leq \mathcal L)$, which complicates the analysis.
Using this heuristic as a guide, if $\log |\zeta|$ achieves a value of $V\sim k\log\log T$, then the value of $S_\ell^{(j)}$ at $T_\ell$ is most likely very close to $\kappa\log\log T_\ell$
where $\kappa$ is the gradient
$$
\kappa=\frac{V}{\log\log T}.
$$
The good events $G_\ell$ for $1\leq \ell\leq \mathcal L$ are defined as to reflect the random walk behavior:
\begin{equation}
\label{eqn: G}
G_\ell=\{S_\ell^{(j)}\in [L_\ell, U_\ell],\ \forall j\geq \ell\}\cap G_{\ell-1},
\end{equation}
with
\begin{equation}
\label{eqn: barriers}
L_\ell=\kappa\log\log T_\ell -c_\ell, \qquad U_\ell=\kappa \log\log T_\ell+c_\ell.
\end{equation}
The parameters $c_\ell$ capture the fluctuations around the linear interpolation.
For a random walk achieving a value $V$ at time $\frac{1}{2}\log\log T$, the fluctuations at time $\frac{1}{2}\log\log T_\ell$ are of the order 
$$
c_\ell\approx \Big( \log\log T-\log\log  T_\ell\Big)^{1/2+\varepsilon}\approx (\log \beta_{\ell}^{-1})^{1/2+\e}.
$$
As seen from the proof, our current approach forces us to take a larger quantity
\begin{equation}
\label{eqn: Bell}
 c_\ell=\beta_\ell^{-\gamma},\quad \gamma=1/25.
\end{equation}
(The analysis allows for any $\gamma$ smaller than $1/20$, cf.~Equation \eqref{eqn: key condition}.) 
Note that $c_{\ell-1}=e^{\gamma}c_\ell>c_\ell$, narrowing the barriers at each subsequent checkpoint.
With these definitions, we have the following main proposition:
\begin{prop}
\label{prop: good}
Assume the Riemann Hypothesis is true. Let $k>0$ and $V=V(T)\sim k \log\log T$.
For all $1\leq \ell\leq \mathcal L$, we have for some absolute constant $c>0$ and $T$ large enough
\begin{align}
\PP(H\cap  G_{\ell-1}\setminus G_\ell)&\leq e^{-c_\ell} \cdot \frac{e^{-V^2/\log\log T}}{\sqrt{\log\log T}} \label{eqn: small},\\
\PP(H\cap  G_{\mathcal L})&\leq  e^{e^{c k}}\cdot \frac{e^{-V^2/\log\log T}}{\sqrt{\log\log T}} \label{eqn: large}.
\end{align}
\end{prop}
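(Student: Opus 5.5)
The plan is to treat the two bounds separately but with the same mechanism. On each event we apply Proposition \ref{prop: Sound} with a suitable $x$, bound $\PP$ by a high moment via Markov's inequality, and compute that moment through the Gaussian approximation of the partial sums from Appendix \ref{sect: moments}.

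For \eqref{eqn: small}, write $G_{\ell-1}\setminus G_\ell$ as the union over $j\geq \ell$ of the events that $S^{(j)}_\ell\notin[L_\ell,U_\ell]$ while $S^{(j)}_{\ell-1}\in[L_{\ell-1},U_{\ell-1}]$ (for $\ell=1$ there is no constraint at time $0$). We then distinguish two cases.
\begin{itemize}
\item \emph{Case $S^{(j)}_\ell>U_\ell$.} Drop $H$ and bound the probability directly. Split $S^{(j)}_\ell=S^{(j)}_{\ell-1}+(S^{(j)}_\ell-S^{(j)}_{\ell-1})$. Use an exponential moment $\E[e^{q S^{(j)}_{\ell}}\mathbf 1\{S^{(j)}_{\ell-1}\in[L_{\ell-1},U_{\ell-1}]\}]$ with $q\approx 2\kappa$, so that the product of independent-like increments gives a Gaussian tail. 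The overshoot $c_\ell$ over the linear interpolation then costs a factor $e^{-c c_\ell^2/(\log\log T-\log\log T_\ell)}$, which is roughly $e^{-c_\ell^{2}/\ell'}$ with $\ell'=\log\beta_\ell^{-1}$.
\item \emph{Case $S^{(j)}_\ell<L_\ell$.} Here we must use $H$. Apply Proposition \ref{prop: Sound} with $x=T_{\ell}$ (so $\sigma=\sigma_\ell$, $\lambda=1/2$), which gives $\log|\zeta|\leq S^{(\ell)}_\ell+\text{(prime cubes etc.)}+\frac{3}{4}\beta_\ell^{-1}$. Hence $H$ forces $S^{(\ell)}_\ell\geq V-\frac34\beta_\ell^{-1}-O(1)$. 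Since $\kappa\log\log T_\ell=V-\kappa\log\beta_\ell^{-1}$, this is a deviation upward of order $\beta_\ell^{-1}$ on a walk of variance $\frac12\log\log T_\ell$, which alone is not small. Instead, use that the increment from $T_{\ell-1}$ to $T_\ell$ (and the correction at the scale $T_\ell$) has to jump by about $\beta_\ell^{-1}$. Because of the factor $a_\ell(p)$ that increment has variance only $O(1)$, so a $2m$-th moment bound with $m\approx\beta_\ell^{-1}$ yields a decay like $e^{-c\beta_\ell^{-1}\log(\beta_\ell^{-1})}$, far smaller than $e^{-c_\ell}$. The Gaussian density at the lower barrier at time $\ell-1$ supplies the factor $e^{-V^2/\log\log T}/\sqrt{\log\log T}$.
\end{itemize}
For the moment computations we need the Dirichlet polynomials to have length at most $T^{1/10}$ or so. This is ensured for $m\leq\beta_\ell^{-1}/10$ because $T_\ell^{m}$ must stay below $T$; this is exactly where the restriction $\gamma<1/20$ enters, via a condition balancing $c_\ell^2$ against the usable moment order.

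For \eqref{eqn: large}, on $G_{\mathcal L}$ apply Proposition \ref{prop: Sound} with $x=T_{\mathcal L}$, where $\beta_{\mathcal L}\approx e^{-10^4k}$. Then $H$ implies $S^{(\mathcal L)}_{\mathcal L}\geq V-O(e^{10^4k})$, while the walk $S^{(\mathcal L)}_\ell$ stays in the barriers for all $\ell\leq\mathcal L$. Bound $\PP$ by $e^{-qV}e^{qO(e^{10^4 k})}\E[e^{qS^{(\mathcal L)}_{\mathcal L}}\mathbf 1_{G_{\mathcal L}}]$ with $q=2\kappa$. Evaluate the twisted measure by comparing to Gaussian moments: expand $e^{qS}$ truncated at each checkpoint, using the barrier so that the truncation errors are controlled. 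Under the tilt the walk has drift $\kappa$, and the barrier together with the endpoint constraint yields a ballot-type estimate. This produces the $1/\sqrt{\log\log T}$ gain and a constant $e^{2\kappa e^{10^4k}}=e^{e^{ck}}$.

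The main obstacle is the lower-barrier case together with the ballot estimate. There the walks $S^{(j)}$ with different abscissae $\sigma_j$ must be compared, and the truncated exponential moments must be justified with polynomial lengths below $T$. I would first try to relate $S^{(j)}_\ell$ to $S^{(\ell)}_\ell$ with an $O(1)$-variance error, reducing everything to one walk per checkpoint.
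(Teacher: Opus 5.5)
\textbf{There is a genuine gap in the lower-barrier case: the step fails at its first move.}

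With $x=T_\ell$ and $\lambda=1/2$, Proposition~\ref{prop: Sound} gives $\log|\zeta(1/2+\ii\tau)|\le S_\ell^{(\ell)}+\frac34\beta_\ell^{-1}+\OO(1)$. So $H$ only forces $S_\ell^{(\ell)}\ge V-\frac34\beta_\ell^{-1}-\OO(1)$.

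On the event you are bounding, however, $S_\ell^{(j)}<L_\ell=V-\kappa\log\beta_\ell^{-1}-c_\ell$. Since $\beta_\ell^{-1}\ge\beta_{\mathcal L}^{-1}\ge e^{10^4k-1}$ and $c_\ell=\beta_\ell^{-1/25}$, we have $\frac34\beta_\ell^{-1}\gg\kappa\log\beta_\ell^{-1}+c_\ell$. The threshold $V-\frac34\beta_\ell^{-1}$ therefore lies far \emph{below} $L_\ell$. It is not an upward deviation, and the constraint coming from $H$ is vacuous.

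In particular, nothing forces the increment $S_\ell-S_{\ell-1}$ to jump by $\asymp\beta_\ell^{-1}$. On $G_{\ell-1}$ the implied lower bound for it is $V-\frac34\beta_\ell^{-1}-U_{\ell-1}$, which is negative. The term $\frac{1+\lambda}{2}\frac{\log T}{\log x}$ is deterministic, and at $x=T_\ell$ it swamps the deficit you want to exploit. Hence the decay $e^{-c\beta_\ell^{-1}\log\beta_\ell^{-1}}$, which was supposed to pay for the Gaussian density below the barrier, is not available, and \eqref{eqn: small} is not established.

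\textbf{The missing idea is to use a much longer polynomial in Proposition~\ref{prop: Sound}.} Take $x=T^{\beta_m}$ with $m$ minimal such that $\beta_m>c_\ell^{-1}=\beta_\ell^{\gamma}$.
\begin{itemize}
\item The error $\frac34\beta_m^{-1}<\frac34 c_\ell$ is then a fixed fraction of the deficit $V-u\ge\kappa\log\beta_\ell^{-1}+c_\ell$, where $S_\ell^{(j)}\in[u-1,u]$ and $u\in[L'_\ell,L_\ell]$. So $H$ forces $S_m^{(m)}-S_\ell^{(j)}\ge\frac15(V-u)$.
\item Split this as $(S_m^{(m)}-S_\ell^{(m)})+(S_\ell^{(m)}-S_\ell^{(j)})$.
\item The first piece is supported on primes in $(T_\ell,T_m]$, disjoint from those of $S_\ell^{(j)}$. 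Its variance is $\OO(\log\beta_\ell^{-1})$, much smaller than $V-u$.
\item A $2q$-th moment with $q\asymp V-u$ is admissible: the length $T^{2q\beta_m}$ stays below $T^{1/2}$ because $\beta_m^{-1}\asymp c_\ell$, and because the a priori restriction $u\ge L'_\ell$ from Lemma~\ref{lem: a priori} gives $V-u\le\kappa\log\beta_\ell^{-1}+4c_\ell$.
\item Inserting this moment into Lemma~\ref{lem: 1 point} yields $e^{-\frac1{10}W\log W}$ with $W=(V-u)/10$. This beats the loss $e^{2\kappa|u-\kappa t_\ell|}$ from the Gaussian density below the barrier, since $|u-\kappa t_\ell|\ge c_\ell>e^{200k}$.
\item The second piece, the abscissa mismatch you flag at the end, needs a genuine two-point estimate (Lemma~\ref{lem: 2 point}). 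It uses that $S_\ell^{(m)}$ and $S_\ell^{(j)}$ have covariance within $\OO(1)$ of their variances. A gap of at least $\frac1{10}(V-u)\ge c_\ell/10$ then costs $e^{-c_\ell^2/800}$.
\end{itemize}

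\textbf{Smaller corrections.}
\begin{itemize}
\item In the upper-barrier case, once $H$ is dropped there is no pinning at $\log\log T$. The relative cost is therefore $e^{\kappa^2\log\beta_\ell^{-1}-2\kappa c_\ell}$, not a bridge factor $e^{-cc_\ell^2/\log\beta_\ell^{-1}}$. This still suffices, because $c_\ell\gg\log\beta_\ell^{-1}$.
\item In \eqref{eqn: large} the factor $1/\sqrt{\log\log T}$ is just the local Gaussian density of $S_{\mathcal L}^{(\mathcal L)}$ in a window of length $\OO(\beta_{\mathcal L}^{-1})$ near $V$. This comes from Proposition~\ref{prop: Sound} at $x=T_{\mathcal L}$ plus Lemma~\ref{lem: 1 point}. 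There is no ballot gain, since the barriers have width $c_\ell\gg\sqrt{\log\beta_\ell^{-1}}$.
\item The condition $\gamma<1/20$ enters through the degree $\Delta_j^{20}$ of the polynomials approximating the window indicators in Lemma~\ref{lem: 1 point}, i.e.\ through the length $T^{\beta_\ell^{1-20\gamma}}$. It does not come from a moment order.
\end{itemize}
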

\begin{proof}[Proof of Theorem \ref{thm: main}]
The proof is straightforward from the above proposition, after noticing that $e^{-c_\ell}$ is summable in $\ell$. The dominant term in the sum is \eqref{eqn: large}.
\end{proof}
The proof of Proposition \ref{prop: good} uses an approximation of indicator functions in terms of Dirichlet polynomials, as detailed in Appendix \ref{sect: GC}. 
The left-hand side of \eqref{eqn: small} is smaller than
\begin{equation}
\label{eqn: split}
\PP\Big( G_{\ell-1}\cap \{\exists j\geq \ell: S_{\ell}^{(j)}>U_{\ell}\}\Big)+\PP\Big(H\cap G_{\ell-1}\cap \{\exists j\geq \ell: S_{\ell}^{(j)}<L_{\ell}\}\Big).
\end{equation}
We refer to these events as the `upper' and `lower' barrier bounds respectively. The main new input of this paper is the estimate of the second term, which is done in Section \ref{sect: LB}. 
In \cite{ArgBai23}, a similar estimate was handled using a twisted fourth moment, which is insufficient when $k\geq 2$. 
We rely instead in Section \ref{sect: LB} on the bound \eqref{eqn: sound bound}.
An {\it a priori} bound is needed for this estimate and is proved in Section \ref{sect: AB}. The first term in \eqref{eqn: split} is dealt with similarly in Section \ref{sect: UB}.
In each case, we estimate the probability for fixed $j\geq \ell$ and use a union bound on $j$. The total contribution will be negligible since 
\begin{equation}
\label{eqn: j}
\#\{j\geq \ell\}=\mathcal L-\ell+1=\log \frac{\beta_\mathcal L}{\beta_\ell}+1\leq \log \beta_\ell^{-1}+1,
\end{equation}
which will be absorbed by the probability yielding  \eqref{eqn: small}.

As in \cite{Sou09, Har13}, we expect that the method of proof of Theorem \ref{thm: main} can be adapted to yield sharp bounds for the large deviations and moments of certain families of Dirichlet $L$-functions.
\\

\noindent{\bf Notation}.
We will use Vinogradov's notation with a slight convenient twist. Let $f,g$ functions of $T$. For $k$ fixed, we write $f\ll g$ whenever 
\begin{equation}
\label{eqn: convention}
\limsup_{T\to\infty}|f/g|\leq e^{c k^2},\ \text{ for some absolute constant $c>0$.}
\end{equation}
We write $f\asymp g$ if $f\ll g$ and $f\gg g$. 
To simplify the notation, we will sometimes in the proof express the checkpoints in the $\log\log$-scale, i.e., 
$$
t_\ell=\log\log T_\ell.
$$

\noindent{\bf Acknowledgements}.
The authors thank Nathan Creighton for his helpful comments on the first draft of this paper. 
L.-P. Arguin is supported by the grants NSF DMS 2153803 and EPSRC EP/Z535990/1. 
 For the purpose of open access, the authors have applied a CC
BY public copyright licence to any author accepted manuscript arising from this
submission.

\section{Proof of Proposition \ref{prop: good}}
We assume throughout that $k\geq 2$. We consider $V\sim k\log\log T$ and we write  $\kappa=V/\log\log T$ for the exact gradient. 
The estimate \eqref{eqn: small} is done by splitting the probability into two estimates for the upper and lower barrier as in  \eqref{eqn: split}.
The results are respectively given in Lemma \ref{lem: UB} and \ref{lem: LB} below. For an accurate estimate, an {\it a priori} bound in Lemma \ref{lem: a priori} is first needed.

\subsection{A Priori Bound}
We start by proving a bound for the complement of the event
\label{sect: AB}
$$
A_\ell= \{|S_\ell^{(j)}-S_{\ell-1}^{(j)}|\leq  2c_\ell,\ \forall j\geq \ell \}, \quad 1\leq \ell\leq \mathcal L.
$$
\begin{lem}
\label{lem: a priori}
For $1\leq \ell \leq \mathcal L$, we have
$$
\PP(G_{\ell-1}\cap A_\ell^{\rm c})\ll e^{-c_\ell}\cdot \frac{e^{-V^2/\log\log T}}{\sqrt{\log\log T}}.
$$
\end{lem}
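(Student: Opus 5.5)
We bound the probability for fixed $j\geq \ell$ and then take a union bound over $j$. By \eqref{eqn: j} there are at most $\log\beta_\ell^{-1}+1$ values of $j$, and this factor is absorbed by an extra exponentially small factor $e^{-c_\ell}$.

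For fixed $j$, we condition on the past. We drop the restrictions in $G_{\ell-1}$ except the last one, $S_{\ell-1}^{(j')}\in[L_{\ell-1},U_{\ell-1}]$, and for $\ell\ge 2$ we use it with $j'=j$ (allowed since $j\ge\ell>\ell-1$). We also keep the full tube constraints at earlier checkpoints with the same index $j$. The increment $Y=S_\ell^{(j)}-S_{\ell-1}^{(j)}$ is a Dirichlet polynomial supported on primes in $(T_{\ell-1},T_\ell]$ together with prime squares in the analogous range. Its variance is about $\frac12(t_\ell-t_{\ell-1})=\frac12$, since $T_\ell=T_{\ell-1}^e$; for $\ell=1$ the increment is all of $S_1^{(j)}$, with variance $\approx\frac14\log\log T$.

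The plan is to write
\[
\PP\big(G_{\ell-1}\cap\{|Y|>2c_\ell\}\big)\le \E\Big[\mathbf 1\big(S_{\ell-1}^{(j)}\in[L_{\ell-1},U_{\ell-1}],\dots\big)\,\frac{|Y|^{2q}}{(2c_\ell)^{2q}}\Big]
\]
with $q\approx c_\ell^2$, or an exponential moment $e^{\mu |Y|}$ with $\mu\approx 1$. We then replace the indicators on the coarser primes ($\le T_{\ell-1}$) by Dirichlet polynomial approximations, as in Appendix \ref{sect: GC}. These have length at most $T_{\ell-1}^{\text{small power}}$, and $Y^{q}$ has length $T_\ell^{2q}$. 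Since $T_\ell=T^{\beta_\ell}$ with $\beta_\ell\le e^{-10^4k}$, the total length is $\le T^{1/2}$ provided $q\beta_\ell\ll 1$, i.e.\ $q\le \beta_\ell^{-1}/100$. Taking $q=c_\ell^2=\beta_\ell^{-2\gamma}$ is fine because $2\gamma<1$.

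The mean-value theorem then decouples the expectation into a product: the probability of the past event for a Gaussian-like random walk, times the $2q$-moment of $Y$ from Appendix \ref{sect: moments}, which is $\ll (2q)!/(2^q q!)\cdot(1/2)^{q}$. The past factor is where the main density comes from. The walk has to sit in the tube up to $t_{\ell-1}$ and later still reach $V$; only the constraint $S_{\ell-1}\approx\kappa t_{\ell-1}$ matters here. Its Gaussian probability is
\[
\ll e^{-\kappa^2 t_{\ell-1}}\, e^{O(\kappa c_{\ell-1})}/\sqrt{t_{\ell-1}} .
\]
Since $t_{\ell-1}=\log\log T-\log\beta_{\ell-1}^{-1}$, this equals $\frac{e^{-V^2/\log\log T}}{\sqrt{\log\log T}}\,e^{\kappa^2\log\beta_{\ell-1}^{-1}+O(\kappa c_{\ell-1})}$.

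The moment factor gives $(Cq/c_\ell^2)^{q}$, which with $q=c_\ell^2/C'$ is $\le e^{-c c_\ell^2}$. This beats $e^{\kappa^2\log\beta_\ell^{-1}+O(kc_\ell)}\cdot e^{-c_\ell}\cdot\log\beta_\ell^{-1}$, because $c_\ell\ge e^{10^4k\gamma}$ is huge compared to $k$. For $\ell=1$ we instead use the plain Gaussian tail with no conditioning: $|Y|>2c_1$ has probability $e^{-c c_1^2/\log\log T}$. Here $c_1=(\log T)^{\gamma/2}$, so this is super-polynomially small in $\log T$.

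The main obstacle is making the decoupling rigorous. The indicators must be approximated by Dirichlet polynomials with controlled length and error for the $2q$-th power (Appendix \ref{sect: GC}), the small correlation between the abscissae $\sigma_j$ and $\sigma_{j'}$ across different $j$ must be handled, and the error terms must stay below the tiny main term $e^{-V^2/\log\log T}$. For this we would keep only the single condition on $S_{\ell-1}^{(j)}$ and require the polynomial approximation error to be $\le (\log T)^{-10k^2}$.
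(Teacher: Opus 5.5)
Your proposal is correct and is essentially the paper's proof. The shared steps are: a union bound over $j$ via \eqref{eqn: j}; Markov's inequality with $q\asymp c_\ell^2$; Lemma~\ref{lem: 1 point} to decouple the $2q$-th moment of the increment (bounded by \eqref{eqn: moment bound}, giving roughly $e^{-4c_\ell^2}$) from the Gaussian density $e^{-L_{\ell-1}^2/t_{\ell-1}}/\sqrt{\log\log T}$ of the same-$j$ past; and an unconditioned moment bound for $\ell=1$.

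Two small remarks. First, your $\ell=1$ tail $e^{-c\,c_1^2/\log\log T}$ is the correct one: since $\E[(S_1^{(j)})^2]\approx\frac14\log\log T$, Markov needs $q\asymp c_1^2/\log\log T$, not the $q=\lceil 4c_1^2\rceil$ written in the paper, and the resulting bound is still far below $e^{-c_1}(\log T)^{-k^2}$. Second, your aside about an exponential moment with $\mu\approx1$ would not suffice, because $e^{-2c_\ell}$ cannot absorb the factor $e^{2\kappa c_{\ell-1}}$ coming from the past.
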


\begin{proof}
As noticed in \eqref{eqn: j}, 
it suffices to bound the probability uniformly in $j$. Henceforth, we omit the notation by writing $S_\ell^{(j)}=S_\ell$.
We first evaluate $\PP(A_1^{\rm c})$, which is simply
$$
\PP(A_1^{\rm c})\ll \log\beta_1^{-1}\cdot \PP(|S_1|> 2c_1)\leq \frac{\log\beta_1^{-1}}{(2c_1)^{2q}}\cdot  \E[|S_1|^{2q}],
$$
using Markov's inequality for some $q>1$. Equation \eqref{eqn: moment bound sqrt} of the appendix is applicable with the choice of $q=\lceil 4c_1^2\rceil$ since $2q \beta_1\leq 10 \beta_1^{1-2\gamma}\leq 1/100$ for $T$ large enough.
This gives
$$
\PP(A_1^{\rm c})\ll\sqrt{\log\log T}\cdot e^{-4c_1^2}\ll e^{-c_1}\cdot  \frac{e^{-V^2/\log\log T}}{\sqrt{\log\log T}}.
$$

For $\ell>1$ we decompose the event in terms of the value of $S_{\ell-1}$, so that
$$
\PP(G_{\ell-1}\cap A_\ell^{\rm c})\ll  (\log \beta_{\ell}^{-1}) \sum_{u\in [L_{\ell-1},U_{\ell-1}]}\PP\Big(\{S_\ell-S_{\ell-1}> 2c_\ell, S_{\ell-1}\in [u-1,u]\}\cap G_{\ell-1}\Big).
$$
By writing
\begin{equation}
\label{eqn: Gu}
G_{\ell-1}(u)=G_{\ell-2}\cap\{S_{\ell-1}\in [u-1,u]\},
\end{equation}
the above is
$$
\ll  (\log \beta_{\ell}^{-1})\sum_{u\in [L_{\ell-1},U_{\ell-1}]} \E\left[\frac{|S_{\ell}-S_{\ell-1}|^{2q}}{(2c_\ell)^{2q}} \mathbf{1}(G_{\ell-1}(u))\right].
$$
We choose $q=\lceil 4c_\ell^2\rceil$. The polynomial $(S_{\ell}-S_{\ell-1})^q$ has then length at most $T^{2q\beta_\ell}$ which is $\leq T^{1/100}$ for $T$ large enough.
We now apply Lemma \ref{lem: 1 point} of the appendix to get
$$
\begin{aligned}
\PP(G_{\ell-1}\cap A_\ell^{\rm c})&\ll (\log \beta_{\ell}^{-1}) \cdot \sum_{u\in [L_{\ell-1},U_{\ell-1}]} \E\left[\frac{|S_{\ell}-S_{\ell-1}|^{2q}}{(2c_\ell)^{2q}}\right]\cdot \frac{e^{-u^2/t_{\ell-1}}}{\sqrt{\log\log T}}\\
&\ll  e^{-4c_\ell^2}\cdot (\log \beta_{\ell}^{-1})\cdot c_{\ell-1}\cdot \frac{e^{-L_{\ell-1}^2/t_{\ell-1}}}{\sqrt{\log\log T}},
\end{aligned}
$$
where we used a trivial bound over $u$ and Equation \eqref{eqn: moment bound}.
Since $L_{\ell-1}=\kappa t_{\ell-1}-c_{\ell-1}$, this is
$$
\begin{aligned}
&\ll  \exp\Big(\kappa^2(t-t_{\ell-1})+2\kappa c_{\ell-1}-c_{\ell-1}^2\Big)\frac{e^{-V^2/\log\log T}}{\sqrt{\log\log T}}\ll e^{-c_\ell}\cdot  \frac{e^{-V^2/\log\log T}}{\sqrt{\log\log T}} ,
\end{aligned}
$$
as claimed.
\end{proof}

\subsection{Upper Barrier}
\label{sect: UB}
We observe that on the event $G_{\ell-1}\cap A_\ell$, $1\leq \ell\leq \mathcal L$, we have
\begin{equation}
\label{eqn: prime}
S_\ell^{(j)} \in [L_\ell', U_\ell '], \ \forall j\geq \ell,
\end{equation}
for the slightly more forgiving barriers
\begin{equation}
\label{eqn: barriers prime}
L'_\ell=\kappa t_{\ell} - 4c_\ell\qquad U'_\ell=\kappa t_{\ell} +4c_\ell.
\end{equation}
 Lemma \ref{lem: UB} and Lemma \ref{lem: LB} restore the uniformity of the restrictions in $\ell$.
We prove:
\begin{lem}
\label{lem: UB}
For $1\leq \ell \leq \mathcal L$, we have
$$
\PP\big(G_{\ell-1}\cap A_\ell\cap \{\exists j\geq \ell: S_\ell^{(j)} >U_\ell\}\big)\ll e^{-c_\ell}\cdot  \frac{e^{-V^2/\log\log T}}{\sqrt{\log\log T}}.
$$
\end{lem}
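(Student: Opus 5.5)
Following the scheme of Lemma \ref{lem: a priori}, we union bound over $j\geq\ell$, which costs the factor $\log\beta_\ell^{-1}+1$ of \eqref{eqn: j}, and then fix $j$, writing $S_\ell=S_\ell^{(j)}$. We decompose according to the value of $S_{\ell-1}$ in unit intervals $[u-1,u]$ with $u\in[L_{\ell-1},U_{\ell-1}]$, as in \eqref{eqn: Gu}. On $A_\ell$ the increment $S_\ell-S_{\ell-1}$ is at most $2c_\ell$. Hence $S_\ell>U_\ell$ forces the increment to exceed $U_\ell-u$. For $u$ near $U_{\ell-1}$ this threshold may be small or even negative, so the increment need not be atypical. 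The gain must therefore come from the Gaussian weight of $S_{\ell-1}$ near $u$ combined with the weight of the increment.

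For fixed $u$ we bound $\1(S_\ell-S_{\ell-1}>U_\ell-u,\ |S_\ell-S_{\ell-1}|\le 2c_\ell)$ by an exponential moment $e^{\mu(S_\ell-S_{\ell-1}-(U_\ell-u))}$. We take $\mu=\kappa$ (the tilt) plus a small correction. The exponential is replaced on $A_\ell$ by its truncated Taylor polynomial of degree $q\asymp c_\ell^2$. The truncation error is controlled because the increment is at most $2c_\ell$ and $\mu c_\ell\ll c_\ell$, so degree $q=\lceil 100 \kappa c_\ell\rceil$ suffices. The resulting Dirichlet polynomial has length at most $T^{2q\beta_\ell}\le T^{1/100}$, since $\beta_\ell\le e^{-10^4k}$ and $c_\ell^2\beta_\ell=\beta_\ell^{1-2\gamma}$ is small. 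Lemma \ref{lem: 1 point} from the appendix then decouples the increment from $\1(G_{\ell-1}(u))$. This gives
$$
\E\big[e^{\mu(S_\ell-S_{\ell-1})}\big]\cdot e^{-\mu(U_\ell-u)}\cdot \frac{e^{-u^2/t_{\ell-1}}}{\sqrt{\log\log T}}\cdot(\text{ballot factor}),
$$
with $\E[e^{\mu(S_\ell-S_{\ell-1})}]\ll e^{\mu^2(t_\ell-t_{\ell-1})/4}$ from the moment estimates of Appendix \ref{sect: moments}.

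We then sum over $u$. Writing $u=\kappa t_{\ell-1}+v$ with $|v|\le c_{\ell-1}$ and $\mu=\kappa$, the exponent is
$$
-\frac{(\kappa t_{\ell-1}+v)^2}{t_{\ell-1}}+\frac{\kappa^2}{4}\cdot\cdots
$$
Completing the square, the exponent becomes the target $-V^2/\log\log T$ plus $\kappa^2(t-t_{\ell-1})$-type terms, plus a residual $-\kappa(c_\ell - v+\dots)$ when $v\le c_\ell$. One has to check that the normalisation of Lemma \ref{lem: 1 point} absorbs the $\kappa^2(t-t_\ell)$ factor, as it did in Lemma \ref{lem: a priori}. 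The two regimes of $v$ are handled separately. When $v\le c_\ell/2$, the tilt yields $e^{-\kappa c_\ell/2}\le e^{-c_\ell}$ since $\kappa\ge 2$ up to $o(1)$. When $v>c_\ell/2$, we do not use the increment; instead the Gaussian weight $e^{-(2\kappa v+v^2/t_{\ell-1})}$ is compared with the profile. On $G_{\ell-1}(u)$ a value $v\sim c_{\ell-1}=e^\gamma c_\ell$ sits at the edge of the barrier.

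The main obstacle is the second regime. There $S_{\ell-1}$ is already high, and we need the fact that a random walk conditioned to end at $V$ has probability decaying like $e^{-v^2/(t-t_{\ell-1})}$ to be at height $v$ above the line. Lemma \ref{lem: 1 point} only gives the one-point density $e^{-u^2/t_{\ell-1}}$, and the linear term $2\kappa v$ then has the wrong sign. To fix this, we would first couple with the endpoint. We use the event $H$ and the bound \eqref{eqn: sound bound} with $x=T_{\mathcal L}$, and apply a two-point version of Lemma \ref{lem: 1 point} involving $S_{\ell-1}$ and $S_{\mathcal L}-S_{\ell-1}$. The Gaussian penalty then becomes $e^{-v^2/(t-t_{\ell-1})}\le e^{-c\,c_{\ell-1}^2\beta}$-type decay. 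Because $c_\ell^2=\beta_\ell^{-2\gamma}$ grows faster than $\log\beta_\ell^{-1}$, this decay beats the union bound and the $c_{\ell-1}$ range of $u$, giving the factor $e^{-c_\ell}$.
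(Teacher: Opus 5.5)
There is a genuine gap, at the step you call the main obstacle. The event in this lemma is $G_{\ell-1}\cap A_\ell\cap\{\exists j\geq \ell: S_\ell^{(j)}>U_\ell\}$, and it does not contain $H$. Your proposed fix is therefore not available: coupling with the endpoint through $H$, using Proposition \ref{prop: Sound} with $x=T_{\mathcal L}$, and a two-point estimate for $S_{\ell-1}$ and $S_{\mathcal L}-S_{\ell-1}$ all rely on information the event does not carry. Nothing here is conditioned on the walk reaching $V$; the factor $e^{-V^2/\log\log T}$ is only a normalisation. Nor is that regime an obstacle, because the sign is the other way. For $u=\kappa t_{\ell-1}+v$ the one-point weight satisfies $e^{-u^2/t_{\ell-1}}\leq e^{-\kappa^2 t_{\ell-1}}e^{-2\kappa v}$. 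Relative to the target you lose $e^{\kappa^2(t-t_{\ell-1})}=\beta_{\ell-1}^{-\kappa^2}$, and for $v>c_\ell/2$ you gain $e^{-2\kappa v}\leq e^{-\kappa c_\ell}$. Since $\log\beta_\ell^{-1}=25\log c_\ell$ and $c_\ell\geq c_{\mathcal L}>e^{200k}$, this already gives $e^{-c_\ell}$ without looking at the increment.

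The end that needs care is the opposite one, $v$ near $-c_{\ell-1}$, where the increment must exceed $\kappa(t_\ell-t_{\ell-1})+c_\ell-v$. Take your tilt $\mu=\kappa$ with $\E[e^{\mu(S_\ell-S_{\ell-1})}]\ll e^{\mu^2(t_\ell-t_{\ell-1})/4}$. The exponent relative to the target is then $\kappa^2(t-t_\ell)-\kappa(c_\ell+v)-v^2/t_{\ell-1}+\OO(\kappa^2)$, so the residual is $-\kappa(c_\ell+v)$, not $-\kappa(c_\ell-v)$. For $\ell$ near $\mathcal L$, where $v^2/t_{\ell-1}$ is negligible, this gives no gain once $v\leq -c_\ell$, and a loss of up to $e^{\kappa(e^\gamma-1)c_\ell}$ at $v=-c_{\ell-1}$. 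So the claimed $e^{-\kappa c_\ell/2}$ for all $v\leq c_\ell/2$ fails.

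The increment has variance about $\frac12(t_\ell-t_{\ell-1})$, so the tilt matching the slope $\kappa$ is $\mu=2\kappa$. With it the terms linear in $v$ cancel exactly, and every $u$ contributes $e^{\kappa^2(t-t_\ell)-2\kappa c_\ell}$ relative to the target, with no case split. That would repair your route, but you would still need to write the tilt in the form $|\mathcal Q|^2$ to invoke Lemma \ref{lem: 1 point}, for instance as the square of a truncated Taylor polynomial of $e^{\mu(S_\ell-S_{\ell-1})/2}$.

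The paper avoids the increment altogether. On $G_{\ell-1}\cap A_\ell$ one already has $S_\ell^{(j)}\leq U'_\ell$ by \eqref{eqn: prime}. It therefore suffices to decompose $S_\ell^{(j)}\in[u-1,u]$ with $u\in[U_\ell,U'_\ell]$ and apply Lemma \ref{lem: 1 point} at time $\ell$ with trivial $\mathcal Q$ and the primed barriers. The weight $e^{-U_\ell^2/t_\ell}\leq e^{-\kappa^2 t_\ell-2\kappa c_\ell}$ absorbs $e^{\kappa^2(t-t_\ell)}$, the $\OO(c_\ell)$ values of $u$, and the union-bound factor $\log\beta_\ell^{-1}$, leaving $e^{-c_\ell}$. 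The case $\ell=1$ is the Markov bound of Lemma \ref{lem: a priori}. This is the same gain $e^{-2\kappa c_\ell}$ that your corrected tilt produces, obtained in one line.
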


\begin{proof}
As in the proof of Lemma \ref{lem: a priori}, we do an union bound on $j$, using \eqref{eqn: j}, and write $S_\ell^{(j)}=S_\ell$. The case $\ell=1$ is identical to the one in Lemma \ref{lem: a priori}.
For $\ell>1$, using Lemma \ref{lem: a priori}, we only have to prove the case when $S_\ell\in [U_\ell, U_\ell']$.
We can apply Lemma \ref{lem: 1 point} to get that the probability in question is
$$
\ll  (\log \beta_{\ell}^{-1})\sum_{u\in [U_\ell,U_{\ell}']} \frac{e^{-u^2/t_\ell}}{\sqrt{t_\ell}}\ll (\log \beta_{\ell}^{-1})\cdot c_{\ell-1}\cdot \frac{e^{-U_\ell^2/t_\ell}}{\sqrt{t_\ell}}\ll e^{-c_\ell}\cdot \frac{e^{-V^2/\log\log T}}{\sqrt{\log\log T}} .
$$
This proves the lemma.
\end{proof}

\subsection{Lower Barrier}
\label{sect: LB}
We prove the bound for the second term in \eqref{eqn: split}.
\begin{lem}
\label{lem: LB} 
For $1\leq \ell \leq \mathcal L$, we have
\[
\PP\big(H \cap G_{\ell-1} \cap A_\ell\cap \{\exists j\geq \ell: S_\ell^{(j)} <L_\ell\} \big)\ll e^{-c_\ell}\cdot  \frac{e^{-V^2/\log\log T}}{\sqrt{\log\log T}}.
\]
\end{lem}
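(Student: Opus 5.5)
Fix $j\geq\ell$ and pay the union-bound factor $\log\beta_\ell^{-1}+1$ from \eqref{eqn: j} at the end. On $G_{\ell-1}\cap A_\ell$ we have $S_\ell^{(j)}\in[L'_\ell,U'_\ell]$, so the event forces $S_\ell^{(j)}\in[L'_\ell,L_\ell]$. The idea is to use $H$ to recover the Gaussian cost lost by dropping below the lower barrier. For this, apply Proposition \ref{prop: Sound} with $x=T_j$ and $\lambda=1/2$, which gives on $H$
$$
V<\log|\zeta(1/2+\ii\tau)|\leq S_\ell^{(j)}+\big(S_{\mathcal L}^{(j)}-S_\ell^{(j)}\big)+R_j+\frac{3}{4}\beta_j^{-1}+\OO(1),
$$
where $R_j$ collects the primes between $T_{\mathcal L}$ and $T_j$ when $j>\mathcal L$. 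Since $j\leq\mathcal L$ in our indexing, $R_j=0$, and the prime squares are already in $S^{(j)}$; higher prime powers contribute $\OO(1)$. Also $\beta_j^{-1}\leq\beta_\ell^{-1}$.

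Since $S_\ell^{(j)}<L_\ell=\kappa t_\ell-c_\ell$, the increment $Y=S_{\mathcal L}^{(j)}-S_\ell^{(j)}$ must satisfy
$$
Y>V-\kappa t_\ell+c_\ell-\tfrac34\beta_\ell^{-1}-\OO(1)=\kappa(t-t_\ell)+c_\ell-\tfrac34\beta_\ell^{-1}-\OO(1).
$$
Here $t-t_\ell=\log\beta_\ell^{-1}$. The Soundararajan term $\tfrac34\beta_\ell^{-1}$ is a problem, since it is much larger than $c_\ell=\beta_\ell^{-\gamma}$. To handle it, apply Proposition \ref{prop: Sound} not with $x=T_j$ but with a larger $x$: take $x=T_{\mathcal L}$, where $\beta_{\mathcal L}\approx e^{-10^4k}$, so the term $\frac{1+\lambda}{2}\beta_{\mathcal L}^{-1}$ is a constant $e^{\OO(k)}$. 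This is where $C_k=\exp(e^{ck})$ comes from. We therefore use the walk $S^{(\mathcal L)}$, which is included in $G$ since $j=\mathcal L\geq\ell$. So fix $j=\mathcal L$ for the $H$-input, while the barrier violation is for some possibly different $j'$. To connect the two, use $A_\ell$ and the bounds on $G_{\ell-1}$ to note that the walks with different $j$ differ, at level $\ell-1$, by at most the barrier width. The cleanest choice is to split $H$ via the $\mathcal L$-walk: on $H$, $S_{\mathcal L}^{(\mathcal L)}\gtrsim V-e^{\OO(k)}$. Then decompose over $u=S_{\ell}^{(\mathcal L)}\in[L'_\ell,U'_\ell]$ in unit bins, together with the value of $S^{(j')}_\ell$, and bound
$$
\PP\big(G_{\ell-1}(u)\cap\{S_\ell^{(j')}<L_\ell\}\cap\{S^{(\mathcal L)}_{\mathcal L}-S^{(\mathcal L)}_\ell>V-u-e^{\OO(k)}\}\big).
$$

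The main step is a two-point Gaussian estimate: the increment over $(T_\ell,T_{\mathcal L}]$ is a Dirichlet polynomial essentially decoupled from the primes $\leq T_\ell$. We bound its tail by Markov's inequality with a $2q$-th moment, using \eqref{eqn: moment bound}, with $q\approx (V-u)^2/(t-t_\ell)$ optimized. This is valid as long as $T_{\mathcal L}^{2q}\leq T^{1/100}$, i.e. $q\ll e^{10^4k}$, which holds because $(V-u)^2/(t-t_\ell)\approx\kappa^2\log\beta_\ell^{-1}+\OO(c_\ell)$ and $\beta_\ell\geq\beta_1$ … The resulting factor is $\exp(-(V-u)^2/(t-t_\ell))$. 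For the first factor, Lemma \ref{lem: 1 point} with the indicator of $G_{\ell-1}(u)\cap\{S^{(j')}_\ell<L_\ell\}$ gives $e^{-u^2/t_\ell}/\sqrt{\log\log T}$, restricted to $u\lesssim L_\ell+\OO(c_{\ell-1}^{1/2})$ because the $j'$ and $\mathcal L$ walks are close. The two walks differ only through the weights $a_j(p)$ and $p^{-\sigma_j}$, and their difference up to $T_\ell$ has variance $\OO(1)$ after summing $(\log p/\log T_j)^2/p$, so this costs $e^{-c_\ell}$ by a moment bound. Completing the square,
$$
\frac{u^2}{t_\ell}+\frac{(V-u)^2}{t-t_\ell}=\frac{V^2}{t}+\frac{t\,(u-\kappa t_\ell)^2}{t_\ell(t-t_\ell)},
$$
and with $u-\kappa t_\ell\leq -c_\ell/2$ the gain is $\exp(-c_\ell^2/(4\log\beta_\ell^{-1}))$. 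This dominates the factors $e^{\OO(k)\kappa}$, the bin count $c_{\ell-1}$ and $\log\beta_\ell^{-1}$, since $c_\ell^2=\beta_\ell^{-2\gamma}\gg\log\beta_\ell^{-1}$ once $\ell$ is below $\mathcal L$ by a $k$-dependent amount. For the last few $\ell$, $c_\ell\asymp e^{\OO(k)}$ and the loss is absorbed in the $\ll$ convention.

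The main obstacle is the interplay between the different abscissae $\sigma_j$: the barrier event concerns $S^{(j')}$, but the input from $H$ comes through the $\mathcal L$-walk. This requires the moment bounds of the appendix to control the difference $S_\ell^{(j')}-S_\ell^{(\mathcal L)}$ jointly with the indicator of $G_{\ell-1}$, without losing the Gaussian density factor, and I expect this is where the restriction $\gamma<1/20$ enters.
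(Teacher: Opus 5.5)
There is a genuine gap, and it sits exactly where your sentence trails off (``and $\beta_\ell\geq\beta_1$ \dots''). With the fixed truncation $x=T_{\mathcal L}$, the increment $S^{(\mathcal L)}_{\mathcal L}-S^{(\mathcal L)}_\ell$ has length $T^{\beta_{\mathcal L}}$. The mean-value theorem therefore only permits $q\lesssim\beta_{\mathcal L}^{-1}\approx e^{10^4k}$ moments, which is a constant.

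The Gaussian tail $e^{-(V-u)^2/(t-t_\ell)}$ used in your completion of the square needs $q\approx (V-u)^2/(t-t_\ell)\geq \kappa^2\log\beta_\ell^{-1}+2\kappa c_\ell$. This is unbounded in $T$ for $\ell$ far from $\mathcal L$; already $\kappa^2\log\beta_\ell^{-1}$ is of order $\kappa^2\log\log T$ for small $\ell$.

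Taking fewer moments does not rescue it. With $q\lesssim e^{10^4k}$, Markov's inequality gives at best a factor $W^{-2q}$, which is polynomial in $c_\ell$. On the other hand, writing $u=\kappa t_\ell+\bar u$ with $\bar u\in[-4c_\ell,-c_\ell]$, the density $e^{-u^2/t_\ell}$ exceeds $e^{-V^2/\log\log T}$ by $e^{\kappa^2\log\beta_\ell^{-1}+2\kappa|\bar u|-\bar u^2/t_\ell}$. This excess is exponential in $c_\ell$ whenever $c_\ell$ is much larger than $e^{10^4k}$ but much smaller than $\kappa t_\ell$, and that range contains a number of checkpoints growing with $T$.

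Your fallback for the last checkpoints also fails. For $\ell$ near $\mathcal L$ one has $V-u\leq\kappa\log\beta_\ell^{-1}+4c_\ell<\frac34\beta_{\mathcal L}^{-1}$; for example $c_{\mathcal L}\approx e^{400k}$ while $\beta_{\mathcal L}^{-1}\approx e^{10^4k}$. So the increment pays essentially nothing, and you are left with a bound exceeding the target by at least $e^{(2\kappa+1)c_\ell}$. The $\ll$ convention, which tolerates only $e^{ck^2}$, cannot absorb this.

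The missing idea is to let the truncation depend on $\ell$, so that the Soundararajan error is balanced against the barrier gap and the admissible moment order at the same time. The paper takes $x=T^{\beta_m}$ with $m$ minimal such that $\beta_m>c_\ell^{-1}=\beta_\ell^{\gamma}$. Then $\frac34\beta_m^{-1}<\frac34c_\ell\leq\frac34(V-u)$ is absorbed, leaving the event $\{S_m^{(m)}-S_\ell^{(j)}\geq\frac15(V-u)\}$.

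The required jump $W=(V-u)/10\geq c_\ell/10$ is far larger than the variance $\asymp\log\beta_\ell^{-1}$ of $S_m^{(m)}-S_\ell^{(m)}$, so no Gaussian tail is needed. Markov's inequality with only $q=\lceil W/5\rceil$ moments suffices; this is admissible since $2q\beta_m\lesssim (V-u)\beta_\ell^{\gamma}<\frac12$. Combined with Lemma~\ref{lem: 1 point}, it yields the density $e^{-u^2/t_\ell}/\sqrt{t_\ell}$ times $e^{-\frac1{10}W\log W}$. Because $\log W\gtrsim\gamma\log\beta_\ell^{-1}\gtrsim 400k$, this factor beats $e^{\kappa^2\log\beta_\ell^{-1}+2\kappa|\bar u|}$ with room to spare for $e^{-c_\ell}$.

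The abscissa mismatch you flagged as the main obstacle is in fact the easier part. The paper splits off $S_\ell^{(m)}-S_\ell^{(j)}$ and controls it with the two-point estimate of Lemma~\ref{lem: 2 point}, essentially as you sketched for $S^{(j')}$ versus $S^{(\mathcal L)}$.
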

\begin{proof}
Proceeding as in the proof of Lemma \ref{lem: a priori}, a union bound on $j$ allows us to consider some fixed $j\geq \ell$. It will be beneficial to retain the superscript notation on $S_\ell^{(j)}$ in the following, see in particular the argument around \eqref{eq:lowerbound_truncation}.

We condition on the position at time $\ell$, and find
  \begin{align*}
    \PP(H\cap G_{\ell-1}\cap A_\ell\cap\{S_\ell^{(j)}<L_\ell\})
    \ll \sum_{u\in [L_\ell',L_\ell]}\PP(\{\log|\zeta(1/2+\ii\tau)|-S_\ell^{(j)}>V-u\}\cap G_{\ell}(u)),
  \end{align*}
using \eqref{eqn: prime} and the notation \eqref{eqn: Gu}. For $\ell=1$, we take $G_1(u)=\{S_1^{(j)}\in[u-1,u]\}$. 

  Now, we use Proposition~\ref{prop: Sound} to replace $\log|\zeta(1/2+\ii\tau)|$ with a Dirichlet polynomial of length $x$ to be determined. 
  This truncation ought to be dependent on the magnitude of the deviation $V-u$. 
  We take $x=T^{\beta_m}$ where $m$ is the smallest index such that
 \begin{equation}\label{eq:truncation_length}
  \beta_m>c_\ell^{-1}=\beta_\ell^{\gamma}.
 \end{equation}
  Applying Proposition~\ref{prop: Sound}, we hence find
  \begin{equation}
  \begin{aligned}
    \{\log|\zeta(1/2+\ii \tau )|-&S_\ell^{(j)}>V-u\}\nonumber\\
    &\subset \Big\{\re\sum_{n\leq T^{\beta_m}}\frac{a(n)}{n^{\sigma_m+\ii \tau}}\frac{\Lambda(n)}{\log n} -S_\ell^{(j)} \geq\frac{1}{4} \beta_m^{-1} +\OO(1/\sqrt{\log T})\Big\}.
    \label{eq:lowerbound_truncation}
  \end{aligned}
  \end{equation}
   For the right-hand side event, the powers of primes greater than $2$ can be absorbed into the lower bound. It follow that
  \[\Big\{\log|\zeta(1/2+\ii \tau )|-S_\ell^{(j)}>V-u\Big\}\subset \Big\{S_m^{(m)} -S_\ell^{(j)} \geq \frac{1}{5}(V-u)\Big\}.\]

  To resolve the discrepancy between the abscissae $\sigma_j$ and $\sigma_m$ when comparing $S_{m}^{(m)}$ and $S_\ell^{(j)}$, we first trivially write
  \[S_m^{(m)} -S_\ell^{(j)} = S_m^{(m)}-S_\ell^{(m)}+(S_\ell^{(m)}-S_\ell^{(j)}).\]
This implies
\begin{equation}
\label{eqn: split W}
  \begin{aligned}
   & \PP\Big(\{S_m^{(m)} -S_\ell^{(j)} \geq \frac{1}{5}(V-u)\}\cap { G_{\ell}(u)}\Big)\leq \\
    &\PP\Big(\{S_m^{(m)}-S_\ell^{(m)}>\frac{1}{10}(V-u)\}\cap{ G_{\ell}(u)}\Big)+ \PP\Big(\{S_\ell^{(m)}-S_\ell^{(j)}>\frac{1}{10}(V-u)\} \cap{G_{\ell}(u)}\Big).
  \end{aligned}
  \end{equation}

  We begin with bounding the first probability on the right-hand side.  In this case, the abscissae are identical. The proof then follows similarly to Lemma~\ref{lem: UB}, using additionally that the Dirichlet polynomials $S_m^{(m)}-S_\ell^{(m)}$ and $S_\ell^{(j)}$ are supported on disjoint sets of primes. Indeed, we have using a Markov inequality and Lemma~\ref{lem: 1 point} that for $W=(V-u)/10$
  \begin{equation}
  \label{eqn: prob LB}
    \PP(\{S_m^{(m)}-S_\ell^{(m)}>W\}\cap G_{\ell}(u))\ll \mathbb{E}\left[\frac{|\mathcal{Q}|^2}{W^{2q}}\right]\frac{e^{-u^2/t_\ell}}{\sqrt{t_\ell}},
  \end{equation}
  provided $\mathcal{Q}=(S_{m}^{(m)}-S_\ell^{(m)})^q$ has length not exceeding $T^{1/2}$. The length is then at most $T^{2q\beta_m}$ so the choice $q=\lceil W/5\rceil$ is permissible since for $u\in[L_\ell',L_\ell]$ we have, using \eqref{eqn: barriers prime} and \eqref{eq:truncation_length}
  \[2q\beta_m <\frac{2e}{50}\frac{(V-u)}{c_\ell}<\frac{2e}{50}\left(\kappa\beta_\ell^{\gamma}\log\beta_\ell^{-1}+4\right)<\frac{1}{2}\ .\]
 An application of Lemma~\ref{lem: Gaussian moments real} and Stirling's approximation yields
  \[\mathbb{E}\left[\frac{|\mathcal{Q}|^2}{W^{2q}}\right] \ll \exp\left(q\log q -q-q\log W^2+q\log(t_m-t_\ell)\right)\ll \exp\left(-\frac{1}{10}W\log W\right),\]
where we used the fact that $W^{1/2}>t_m-t_\ell$. 
Writing $\bar u=u-\kappa t_\ell$, the first probability in \eqref{eqn: split W} is then
  \begin{align*}
    \ll&\frac{e^{-\kappa^2/t_\ell}}{\sqrt{t_\ell}}\sum_{\bar u\in[-4c_\ell,-c_\ell]}  e^{2\kappa |\bar u|}\cdot e^{-\frac{1}{100}|\bar u|\log |\bar u|}
    \ll\frac{e^{-V^2/\log\log T}}{\sqrt{\log\log T}}\cdot e^{\kappa^2(t-t_\ell)}\sum_{\bar{u}\in[-4c_\ell, -c_\ell]} e^{2k |\bar u|-\frac{1}{100}|\bar u|\log |\bar u|}.
  \end{align*}
  The exponent $2k - \frac{1}{100}\log|\bar{u}|$ is negative when $\bar{u}>e^{200k}$.  Recall that $\bar{u}\in[c_\ell,4c_\ell]$, so indeed $c_\ell\geq c_{\mathcal{L}} = \beta_{\mathcal{L}}^{-\gamma} >e^{200k}$ by the choice of $\mathcal{L}$. Therefore  the contribution from the first event in the right-hand side of \eqref{eqn: split W} is
  \[\ll e^{-c_\ell}\cdot  \frac{e^{-V^2/\log\log T}}{\sqrt{\log\log T}}.\]
  
  Returning to the second probability in \eqref{eqn: split W}, we introduce a further decomposition and bound as
  \begin{align*}
    \ll\sum_{\substack{u\in[L_\ell',L_\ell], w\in[L_\ell', U'_\ell]\\|w-u|>\frac{1}{10}(V-u)}}\PP(\{S_\ell^{(m)}\in[w-1,w]\}\cap\{S_\ell^{(j)}\in[u-1,u]\}\cap G_{\ell-1}).
  \end{align*}
  This is now in the form to apply the two-point estimate of Lemma~\ref{lem: 2 point}. This yields
  $$
  \begin{aligned}
  \ll 
  \frac{1}{\sqrt{t_\ell}}\sum_{\substack{u\in[L_\ell',L_\ell], w\in[L_\ell', U'_\ell]\\|w-u|>\frac{1}{10}(V-u)}}e^{-\frac{(u+w)^2}{4t_\ell}}e^{-\frac{1}{8}(w-u)^2}
 &   \ll (\log\beta_\ell^{-1})\cdot \frac{e^{-{L_\ell'}^2/t_\ell}}{\sqrt{t_\ell}}\cdot e^{-c_\ell^2/800},
  \end{aligned}
  $$
  using that $V-u>c_\ell$ for $u\in [L_\ell',L_\ell]$. This is $\ll e^{-c_\ell}\cdot \frac{e^{-V^2/\log\log T}}{\sqrt{\log\log T}}$ concluding the proof.
%
\end{proof}


\section{Applications to Short Intervals}
In \cite[Corollary 1.3]{ArgBai23}, it was shown that a large deviation estimate together with an appropriate discretization yields a probabilistic upper bound for the maximum of $|\zeta|$ in an interval 
of length $(\log T)^{\gamma}$ for some exponent $\gamma\geq 0$. The unconditional result was then limited to $\gamma<3$, because of the restriction $k<2$ on the values $V\sim k\log\log T$.
The same proof together with now Theorem \ref{thm: main} extends this bound conditionally to all $\gamma\geq 0$:
\label{sect: short}
\begin{cor}
	\label{cor: max}
	Assume the Riemann Hypothesis is true. Let $\gamma\geq0$ and $y>0$ such that $y=\oo\left(\frac{\log\log T}{\log\log\log T}\right)$. We have
	\begin{equation}
		\label{eqn: max}
		\max_{|h|\leq (\log T)^\gamma}|\zeta(1/2+\ii t+\ii h)|\leq e^y\frac{(\log T)^{\sqrt{1+\gamma}}}{(\log\log T)^{1/(4\sqrt{1+\gamma})}},
	\end{equation}
	for all $t\in[T,2T]$ except on a set of Lebesgue measure $\ll e^{-2y\sqrt{1+\gamma}} e^{-y^2/\log\log T}$.
\end{cor}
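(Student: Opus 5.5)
We follow the discretization argument of \cite[Corollary 1.3]{ArgBai23}, with Theorem \ref{thm: main} as the new input. Set
$$
V=\sqrt{1+\gamma}\,\log\log T-\frac{1}{4\sqrt{1+\gamma}}\log\log\log T+y,
$$
so that the event in \eqref{eqn: max} fails exactly when $\max_{|h|\leq(\log T)^\gamma}\log|\zeta(1/2+\ii\tau+\ii h)|>V$. Note that $V\sim k\log\log T$ with $k=\sqrt{1+\gamma}$, because $y=\oo(\log\log T/\log\log\log T)$. This is the point where $k\geq 2$ (equivalently $\gamma\geq 3$) becomes allowed, since Theorem \ref{thm: main} holds for every fixed $k>0$.

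\textbf{Discretization.} We pass from the maximum over a continuous interval to a maximum over a discrete set. Under RH (or unconditionally, via a Cauchy/Sobolev-type bound applied to $\log|\zeta|$ or to $\zeta$ itself), the maximum over an interval of length $1$ is controlled by values at roughly $\log T$ equally spaced points. Shifting $V$ by an $\OO(1)$ amount handles the passage from $\zeta$ at a point to its local maximum. The cleanest route is the one in \cite{ArgBai23}: bound $|\zeta(1/2+\ii t)|$ by an average of $|\zeta|$ over a short window plus a derivative term, and then apply a union bound over $\asymp(\log T)^{1+\gamma}$ points $h_i$ in $[-(\log T)^\gamma,(\log T)^\gamma]$.

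\textbf{Union bound and arithmetic.} By translation invariance of $\tau$, each point contributes at most $\PP(\log|\zeta(1/2+\ii\tau)|>V-\OO(1))$. Theorem \ref{thm: main} bounds this by $C_k e^{-V^2/\log\log T}/\sqrt{\log\log T}$. Expanding the square gives
$$
V^2/\log\log T=(1+\gamma)\log\log T-\tfrac12\log\log\log T+2y\sqrt{1+\gamma}+\frac{y^2}{\log\log T}+\oo(1),
$$
where the cross term $y\log\log\log T/\log\log T$ is $\oo(1)$ by the hypothesis on $y$. Consequently
$$
(\log T)^{1+\gamma}e^{-V^2/\log\log T}/\sqrt{\log\log T}\asymp e^{-2y\sqrt{1+\gamma}}e^{-y^2/\log\log T},
$$
because the $(\log\log T)^{1/2}$ gained from $\tfrac12\log\log\log T$ cancels the $1/\sqrt{\log\log T}$ prefactor. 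This is exactly why the exponent $1/(4\sqrt{1+\gamma})$ appears. The constants $C_k$ and the $\OO(1)$ shift of $V$ are absorbed into $\ll$.

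\textbf{Main obstacle.} The delicate step is the discretization. Replacing the continuous maximum by a maximum over $\asymp(\log T)^{1+\gamma}$ points must cost only a constant factor, not an extra power of $\log\log T$, since any such loss would spoil the precise exponent. I would follow \cite{ArgBai23}'s approach: write $\zeta(1/2+\ii t+\ii h)$ through a smoothed local average using Cauchy's formula on a disc of radius $\asymp 1/\log T$. That reduces the maximum to finitely many points at the cost of a shift $V\mapsto V-\OO(1)$, which only affects the constant.
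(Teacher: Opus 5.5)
Your proposal is correct and is essentially the paper's argument. The paper reruns the proof of \cite[Corollary 1.3]{ArgBai23}: reduce to $\asymp(\log T)^{1+\gamma}$ grid points at an $\OO(1)$ cost in $V$, then take a union bound. The new input is Theorem \ref{thm: main}, now available for $k=\sqrt{1+\gamma}\geq 2$. Your expansion of $V^2/\log\log T$ checks out, including the use of $y=\oo(\log\log T/\log\log\log T)$ to kill the cross term and the cancellation of $\sqrt{\log\log T}$ that produces the exponent $1/(4\sqrt{1+\gamma})$.

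One caution: drop your Cauchy-formula sketch of the discretization. On its own, a disc of radius $\asymp 1/\log T$ about a point on the line bounds $|\zeta|$ by values or local averages off the critical line. Theorem \ref{thm: main} does not directly control those, and in particular the sketch does not reduce the maximum to finitely many points on the line. Instead, import the $\OO(1)$-loss discretization of \cite{ArgBai23}, as the paper does.
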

The bound is expected to be sharp for $\gamma>0$. The case $\gamma=0$ corresponds to the Fyodorov-Hiary-Keating Conjecture and was proved in \cite{ArgBouRad20}. 
The large deviation estimate can also be used exactly as in \cite[Corollary 1.4]{ArgBai23} to yield a conditional upper bound on the moments in short intervals.
\begin{cor}
	\label{cor: subcritical}
	Assume the Riemann Hypothesis is true. Let $\gamma\geq0$. For all $\beta\geq 0$, we have for $A>1$
	\begin{equation}
		\label{eqn: subcritical}
		\int_{|h|\leq (\log T)^\gamma} |\zeta(1/2+\ii t+\ii h)|^\beta\rd h\leq A (\log T)^{\frac{\beta^2}{4}+\gamma},
	\end{equation}
	for all $t\in [T,2T]$ except possibly on a subset of Lebesgue measure $\ll 1/A$. 
	
	For $\beta>\beta_c=2\sqrt{1+\gamma}$, a sharper bound holds:
	\begin{equation}
		\label{eqn: supercritical}
		\int_{|h|\leq (\log T)^\gamma} |\zeta(1/2+\ii t+\ii h)|^\beta\rd h\leq C_{A,\beta}\cdot  (\log\log T)^{-\tfrac{\beta}{2\beta_c}}\cdot (\log T)^{\tfrac{\beta_c}{2}\beta -1},
	\end{equation}
	for all $t\in [T,2T]$ except possibly on a subset of Lebesgue measure $\ll 1/A$, where $C_{A,\beta}$ is an explicit constant dependent on $A$ and $\beta$.
\end{cor}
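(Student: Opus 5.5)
The plan is to follow the proof of \cite[Corollary 1.4]{ArgBai23}, with Theorem \ref{thm: main} replacing the unconditional large deviation input; the latter was only available for $V\sim k\log\log T$ with $k<2$.

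For \eqref{eqn: subcritical}, Markov's inequality in $t$ reduces the claim to the bound
$$
\frac1T\int_T^{2T}\int_{|h|\le(\log T)^\gamma}|\zeta(1/2+\ii t+\ii h)|^\beta\,\rd h\,\rd t\ll (\log T)^{\beta^2/4+\gamma}.
$$
By Fubini and translation invariance of the uniform measure on $[T,2T]$ (up to negligible boundary terms, since $(\log T)^\gamma=\oo(T)$), this is $(\log T)^{\gamma}$ times the $\beta$-th moment. That moment is $\ll(\log T)^{\beta^2/4}$ by Corollary \ref{cor: moments} with $k=\beta/2$. For small $\beta$, where Corollary \ref{cor: moments} is stated only for $k\ge2$, we can either use \cite{HeaRadSou19} or run \eqref{eqn: IBP} directly with Theorem \ref{thm: main}. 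This part is routine.

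For \eqref{eqn: supercritical}, we split the integrand according to the size of $\log|\zeta|$ on dyadic-type slabs $V\in[v,v+1]$. Set
$$
V_c=\sqrt{1+\gamma}\,\log\log T-\tfrac{1}{4\sqrt{1+\gamma}}\log\log\log T,
$$
so that $e^{V_c}$ is the threshold in \eqref{eqn: max}. Values of $V$ above $V_c+y$ are excluded outside a set of measure $\ll 1/A$ by Corollary \ref{cor: max}, choosing $y=y(A)$. This corollary itself follows from Theorem \ref{thm: main} together with a discretization of the interval at spacing $(\log T)^{-1}$ and a union bound over $(\log T)^{1+\gamma}$ points. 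For $V\le V_c+y$, we bound the expected measure in $h$ of the set where $\log|\zeta|\in[v,v+1]$ by $(\log T)^\gamma\,\PP(\log|\zeta|>v)$. Theorem \ref{thm: main} controls this probability, and the uniformity of its constant in $k$ over compact ranges matters here since $v$ varies. The contribution of the slab is then
$$
\ll (\log T)^\gamma\, e^{\beta v}\,\frac{e^{-v^2/\log\log T}}{\sqrt{\log\log T}}.
$$
For $\beta>\beta_c$ the exponent $\beta v-v^2/\log\log T$ is increasing up to $V_c$. The sum over slabs is therefore geometric and dominated by $v$ near $V_c$, which yields $(\log T)^{\beta_c\beta/2-1}(\log\log T)^{-\beta/(2\beta_c)}$ after inserting $V_c$. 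A final Markov inequality in $t$ gives the exceptional set of measure $\ll 1/A$, with $C_{A,\beta}$ collecting $e^{\beta y}$ and the geometric sum.

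The main obstacle is the bookkeeping near the critical level. The $\log\log\log T$ correction in $V_c$ must produce exactly the power $(\log\log T)^{-\beta/(2\beta_c)}$. The Gaussian factor $e^{-v^2/\log\log T}$ evaluated at $V_c$ gives $(\log T)^{-(1+\gamma)}(\log\log T)^{1/2}$, and this $(\log\log T)^{1/2}$ must cancel against the $1/\sqrt{\log\log T}$ prefactor. Both checks must be done carefully, but they are identical to \cite{ArgBai23}.
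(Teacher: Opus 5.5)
Your proposal is correct and follows the paper's own route. The paper simply defers to the proof of \cite[Corollary 1.4]{ArgBai23}, with Theorem~\ref{thm: main} as the new large deviation input: Markov's inequality plus the moment bound give \eqref{eqn: subcritical}, and \eqref{eqn: supercritical} follows from truncation at the threshold of Corollary~\ref{cor: max}, a slab decomposition in $\log|\zeta|$, and Markov, exactly as you do. Your bookkeeping at $V_c$ checks out, and the only step you leave implicit is harmless: the levels $v\le\varepsilon\log\log T$, where Theorem~\ref{thm: main} is not stated, are covered by the trivial bound $(\log T)^{\gamma+\beta\varepsilon}$.
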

The result should be sharp for $\gamma>0$, up to constant. In the critical case $\beta_c=2$ for $\gamma=0$, Harper in \cite{Har19} proved that the moment is stochastically bounded by $\log T/\sqrt{\log\log T}$. 
See also \cite{ArgHam24} for a different proof. 

\begin{appendix}
\section*{Appendix}
\section{Moments of $S_\ell^{(j)}$}
\label{sect: moments}
The moments of the partial sums \eqref{eqn: S} can be estimated by comparison with the Steinhaus model. 
Namely, consider $(X(p), p \text{ primes})$ independent random variables uniformly distributed on the unit circle. For $n\in \N$, define the random variable $X(n)$ multiplicatively according to the prime factorization of $n$. 
Then the mean-value theorem for Dirichlet polynomials, see for example  \cite{MonVau74}, can be stated as follows: for a sequence of complex numbers $(b_n, n\geq 1)$, and $\tau$ uniform on $[T, 2T]$ 
\begin{equation}
\label{eqn: MV}
\E\left[\Big|\sum_{n\leq N} b_n n^{-\ii \tau}\Big|^2\right]=\Big (1+\OO(N/T)\Big)\E\left[\Big|\sum_{n\leq N} b_n X(n)\Big|^2\right]=\Big (1+\OO(N/T)\Big)\sum_{n\leq N} |b_n|^2.
\end{equation}
In particular, this directly implies that for two Dirichlet polynomials with complex coefficients $(b_n, n\geq 1)$ and $(c_n, n\geq 1)$
\begin{equation}
\label{eqn: MV real}
\begin{aligned}
\E\left[\Big(\sum_{n\leq N} b_n n^{-\ii \tau}\Big)\overline{\Big(\sum_{n\leq N} c_n n^{-\ii \tau}\Big)}\right]&=\E\left[\Big(\sum_{n\leq N}b_nX(n)\Big)\overline{\Big(\sum_{n\leq N}c_nX(n)\Big)}\right]+\OO\Big(\frac{N}{T}\sum_{n\leq N} |b_n|^2+ |c_n|^2\Big).\\
&=\sum_{n\leq N}b_n\overline{c_n}+\OO\Big(\frac{N}{T}\sum_{n\leq N} |b_n|^2+ |c_n|^2\Big).
\end{aligned}
\end{equation}
These formulas can be applied to get the variance of the real sums $S_\ell^{(j)}$ defined in \eqref{eqn: S}: for $j\geq \ell$, 
\begin{equation}
\label{eqn: variance}
\begin{aligned}
\E[(S_\ell^{(j)})^2]&=\frac{1}{2}\sum_{p\leq T_\ell}\frac{a_j(p)^2}{p^{2\sigma_j}}+\frac{1}{4}\sum_{p^2\leq T_\ell} \frac{a_j(p^2)^2}{p^{4\sigma_j}} +\oo(1)\\
&\leq \frac{1}{2}\log\log T_\ell +1 ,
\end{aligned}
\end{equation}
by Mertens's estimates and the definition of $\sigma_j$ and $a_j$. The covariance at two abscissae $i,j\geq \ell$ is computed similarly
\begin{equation}
\label{eqn: covariance}
\begin{aligned}
\left|\E[S_\ell^{(i)}S_\ell^{(j)}]\right|&=\frac{1}{2}\left|\sum_{p\leq T_\ell}\frac{a_i(p)a_j(p)}{p^{\sigma_i+\sigma_j}}\right|+\OO(1)\\
&=\frac{1}{2}\sum_{p\leq T_\ell}\frac{1}{p^{\sigma_i+\sigma_j}}+\OO\Big(\sum_{p\leq T_\ell}\frac{\log p}{\log T_i}+\frac{\log p}{\log T_j}\Big)+\OO(1)=\frac{1}{2}\log\log T_\ell +\OO(1).
\end{aligned}
\end{equation}

The comparison with Steinhaus random variables can also be used to obtain upper bound for the moments of $S_\ell^{(j)}$. 
The following is Lemma 16 of~\cite{ArgBouRad20}. Note that in their case $a_j(p)=1$ and $\sigma_j=1/2$, but the same upper bound still holds.
\begin{lem}\label{lem: Gaussian moments real}
Let $S_\ell^{(j)}, S_k^{(j)}$ be as in \eqref{eqn: S} for $1\leq k<\ell\leq j$. For any $q\in \N$ with $T_\ell^{2q}\leq T^{1/4}$, we have
  \begin{equation}
  \label{eqn: moment bound}
  \mathbf{E}[\big(S_\ell^{(j)} - S_k^{(j)}\big)^{2q}] \ll \frac{(2q)!}{2^q q!} \left(\frac{\log \beta_\ell - \log \beta_k}{2}\right)^{q}.
  \end{equation}
  Moreover, there exists $C>0$ such that for $\ell=1$ and  $2q\leq \frac{\log T}{\log T_1}$,
  \begin{equation}\label{eqn: moment bound sqrt}
    \mathbf{E}[\big(S_1^{(j)}\big)^{2q}] \ll \sqrt{q}\frac{(2q)!}{2^q q!} \left(\frac{\log\log T_1}{2}+C\right)^{q}.
  \end{equation}
\end{lem}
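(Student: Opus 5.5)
The plan is to reduce to the Steinhaus model via the mean value theorem \eqref{eqn: MV real}, and then bound the moments of a sum of independent random variables by Gaussian moments.

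\emph{Step 1 (reduction to the random model).} Write $S_\ell^{(j)}-S_k^{(j)}=\re P(\tau)$ with
$$
P(\tau)=\sum_{T_k<p\le T_\ell}\frac{a_j(p)}{p^{\sigma_j+\ii\tau}}+\frac12\sum_{T_k<p^2\le T_\ell}\frac{a_j(p^2)}{p^{2\sigma_j+2\ii\tau}}.
\]
Expand
\[
(\re P)^{2q}=2^{-2q}\sum_{r=0}^{2q}\binom{2q}{r}P^r\,\overline{P}^{\,2q-r}.
\]
Each $P^r$ and $\overline P^{\,2q-r}$ is a Dirichlet polynomial of length at most $T_\ell^{2q}\le T^{1/4}$. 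By \eqref{eqn: MV real}, every term equals the corresponding Steinhaus expectation, with $n^{-\ii\tau}$ replaced by $X(n)$, up to an error of $\OO(T^{-3/4})$ times the $\ell^2$-mass of the coefficients. That mass is at most $(\sum_{p\le T_\ell}p^{-1})^{2q}\le(\log\log T)^{2q}$, so the error is negligible. Hence $\E[(S_\ell-S_k)^{2q}]=\E[(\re P(X))^{2q}]+\oo(1)$.

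\emph{Step 2 (Gaussian comparison for independent sums).} In the Steinhaus model, $\re P(X)=\sum_p Z_p$ is a sum of independent, mean-zero, bounded variables
\[
Z_p=\re\Big(\frac{a_j(p)X(p)}{p^{\sigma_j}}+\frac{a_j(p^2)X(p)^2}{2p^{2\sigma_j}}\Big).
\]
By the rotation invariance of $X(p)$, the odd moments $\E[Z_p^{m}]$ with only the first term vanish. A direct computation gives $|\E Z_p^m|\le (m!/(2^{m/2}(m/2)!))\,v_p^{m/2}(1+\OO(p^{-1/2}))^m$, where $v_p=\E Z_p^2=\frac{a_j(p)^2}{2p^{2\sigma_j}}+\OO(p^{-2})$. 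I would then expand $\E[(\sum_p Z_p)^{2q}]$ multinomially and group the indices by the set of distinct primes.

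\begin{itemize}
\item Terms in which every prime appears exactly twice give at most $\frac{(2q)!}{2^qq!}(\sum_p v_p)^q$.
\item Terms with higher multiplicities are dominated termwise by the corresponding Gaussian moments, up to factors $(1+\OO(p^{-1/2}))$.
\end{itemize}

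Equivalently, one can use $\E e^{sZ_p}\le \exp(s^2v_p/2+\OO(|s|^3p^{-3/2}))$, but a combinatorial pairing count as in \cite{ArgBouRad20} is cleaner. For $p>T_k\ge T_1$ these correction factors multiply to $1+\oo(1)$ uniformly for $q$ in the allowed range. Also $\sum_{T_k<p\le T_\ell}v_p\le\frac12(\log\log T_\ell-\log\log T_k)+\oo(1)=\frac12(\log\beta_\ell-\log\beta_k)+\oo(1)$ by Mertens. This gives \eqref{eqn: moment bound}.

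\emph{Step 3 (the case $\ell=1$).} Here the sum starts at $p=2$, and the small primes and prime squares are genuinely non-Gaussian, which I expect to be the main obstacle. I would split $S_1=Y_{\rm small}+Y_{\rm large}$ at primes $p\le P_0$ for a fixed large constant $P_0$.
\begin{itemize}
\item $Y_{\rm large}$ is handled exactly as in Step 2, with variance $\frac12\log\log T_1+\OO(1)$.
\item $Y_{\rm small}$ is bounded. More usefully, its moment generating function satisfies $\E e^{sY_{\rm small}}\le e^{Cs^2}$ for $|s|$ not too large, and a crude sub-Gaussian-with-bounded-tail estimate holds for larger $|s|$.
\end{itemize}
Combining the two pieces by independence, I would bound $\E[(Y_{\rm small}+Y_{\rm large})^{2q}]$ by writing it through the moment generating function at the saddle point $s\approx\sqrt{2q/\log\log T_1}$ and inverting. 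The loss from the saddle-point (Chernoff-to-moment) conversion is exactly the factor $\sqrt q$. The constant $C$ absorbs the extra variance from the small primes and prime squares. The condition $2q\le \log T/\log T_1$ is precisely what makes Step 1 applicable here.
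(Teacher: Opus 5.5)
The paper does not reprove this lemma. It imports Lemma 16 of \cite{ArgBouRad20}, which is stated there for $a_j\equiv 1$ and $\sigma_j=\frac12$, and remarks that the damped weights do not affect the upper bound. Your self-contained route (mean value reduction to the Steinhaus model, then Gaussian comparison) works for \eqref{eqn: moment bound}, with one imprecision in Step 2. Write $Z_p=\re(b_pX(p)+c_pX(p)^2)$. For $T_k<p\le T_\ell^{1/2}$ both coefficients are nonzero, so $\E[Z_p^3]=\frac34 b_p^2c_p\neq 0$. Terms with an odd multiplicity $\ge 3$ are therefore not dominated termwise by the Gaussian odd moments, which vanish. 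They are negligible since $p>T_1$, but this needs an argument. The MGF route handles it cleanly if you convert via $x^{2q}\le (2q/(es))^{2q}e^{s|x|}$, which at the optimal $s$ costs only a bounded factor, not $\sqrt q$.

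The genuine gaps are in Step 3. First, Step 1 does not apply at $\ell=1$.
\begin{itemize}
\item The hypothesis $2q\le \log T/\log T_1$ only gives $T_1^{2q}\le T$. Near the top of this range, the error in \eqref{eqn: MV real} is $\OO(1)$ times the $\ell^2$-mass, not $\OO(T^{-3/4})$ times it.
\item For $r=0$ or $r=2q$ that mass is comparable to $\E|P(X)|^{4q}$, which dwarfs the right side of \eqref{eqn: moment bound sqrt}.
\item Your mass bound $(\log\log T)^{2q}$ also drops a factor of order $(2q)!$. That is harmless only when the error carries a power saving.
\end{itemize}
You need a sharper input. One option is the bilinear Montgomery--Vaughan inequality, whose off-diagonal error is $\ll T^{-1}(N_1N_2)^{1/2}\|b\|_2\|c\|_2$; here $N_1N_2=T_1^{2q}\le T$. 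Another is $(\re P)^{2q}\le |P^q|^2$, where $P^q$ has length $T_1^q\le T^{1/2}$. This also explains the $\sqrt q$, since $q!\,V^q\asymp \sqrt q\,\frac{(2q)!}{2^qq!}(V/2)^q$.

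Second, a fixed cutoff $P_0$ cannot work.
\begin{itemize}
\item Since $q$ may be as large as $\frac12\sqrt{\log T}$, the saddle point $s\asymp (q/\log\log T_1)^{1/2}$ tends to infinity.
\item The slack of $(\frac12\log\log T_1+C)^q$ over $(\frac12\log\log T_1)^q$ is only a factor $e^{\OO(s^2)}$.
\item Your per-prime bound $\exp(s^2v_p/2+\OO(|s|^3p^{-3/2}))$, summed over $p>P_0$, leaves an error of size $s^3P_0^{-1/2}$ in the exponent, which is not $\OO(s^2)$.
\item The claim that $Y_{\rm large}$ is handled ``exactly as in Step 2'' also fails. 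Step 2 relied on $p>T_1$ to make every correction $1+\oo(1)$, and that is false for $p$ just above a fixed $P_0$ when $q\to\infty$.
\end{itemize}
The missing idea is a $q$-dependent cut at $p\asymp s^2$. Below it, use the trivial bound $\E e^{sZ_p}\le e^{|s|(b_p+c_p)}$; its total over $p\lesssim s^2$ is $\OO(s^2/\log s)$. Above it, $sb_p=\OO(1)$, and an expansion gives $\log \E e^{sZ_p}\le \frac{s^2b_p^2}{4}+\OO(s^3p^{-2}+s^2p^{-2})$. Summed over $p\gtrsim s^2$ this is $\OO(s^2)$, which can be absorbed into $C$.
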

The following upper bound for the moment generating function of the Steinhaus version of \eqref{eqn: S} is useful, and can be obtained by direct computation, see for example \cite[Lemma 15]{ArgBouRad20}:
for any fixed $\lambda\in \R$, and $0\leq k<\ell\leq \mathcal L$,
\begin{equation}
\label{eqn: MGF steinhaus}
\E\Big[\exp\Big( \lambda\sum_{T_k<p\leq T_\ell}\frac{a_j(p)}{p^{\sigma_j}}\re X(p)+\frac{\lambda}{2} \sum_{T_k<p^2\leq T_\ell}\frac{a_j(p^2)}{p^{2\sigma_j}}\re X(p^2) \Big)\Big]
\ll_\lambda \left(\frac{\log T_\ell}{\log T_k}\right)^{\frac{\lambda^2}{4}}.
\end{equation}

\section{Gaussian Comparison on the Good Events}
\label{sect: GC}
The following result appears as Lemma 2.6 in \cite{ArgBai23}. There, it was applied to a partition $(T_\ell, 1\leq \ell\leq \mathcal L)$ of the primes, which was sparser than the $(T^{\beta_\ell}, 1\leq \ell\leq \mathcal L)$ used herein.
We include a proof of the result for the partition \eqref{eqn: Tell} for completeness. The result holds with $\asymp$ instead of $\ll$, but the lower bound is not needed for our purpose.
We prove the lemma for the events $G_\ell$ defined in \eqref{eqn: G}. It applies equally well with the restrictions $L_\ell'$, $U_\ell'$ defined in \eqref{eqn: barriers prime}
as they only differ by a multiple of the $c_\ell$'s. 
\begin{lem}
\label{lem: 1 point}
  Let $1\leq \ell\leq \mathcal L$ and $\mathcal{Q}$ be of the form $\mathcal Q=(S_m^{(m)}-S_\ell^{(m)})^q$ for $q\in \N$, and $m\leq \mathcal L$ such that $T^{2q\beta_m}\leq T^{1/4}$.
  Consider the event $G_{\ell}(u)=G_{\ell-1}\cap\{S_\ell^{(j)}\in [u-1,u]\}$ with $u\in [L_\ell, U_{\ell}]$ and some $j\geq \ell$. We have
\begin{equation}
\label{eqn: appendix lem}
\E\Big[|\mathcal Q|^2 \1(G_{\ell}(u))\Big]\ll \E\Big[|\mathcal Q|^2\Big]\cdot \frac{e^{-u^2/\log\log T_\ell}}{\sqrt{\log\log T_\ell}}.
\end{equation}
\end{lem}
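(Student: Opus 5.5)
Our plan is to replace the indicator $\1(G_\ell(u))$ by a short Dirichlet polynomial built from the increments $S_i^{(j')}-S_{i-1}^{(j')}$, $i\leq \ell$. We then use the mean-value theorem \eqref{eqn: MV real} to pass to the Steinhaus model. There the independence of $X(p)$ over disjoint prime ranges factorizes out $\E[|\mathcal Q|^2]$, because $\mathcal Q$ only involves primes in $(T_\ell,T_m]$. What remains is the probability of a random walk event, which we compute by a Gaussian (local CLT) estimate.

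\textbf{Step 1: smoothing the indicators.} Write $Y_i^{(j')}=S_i^{(j')}-S_{i-1}^{(j')}$. The event $G_\ell(u)$ is determined by the vectors $(S_i^{(j')})_{j'\geq i}$ for $i\leq \ell$, hence by the increments $Y_i^{(j')}$. For each $i$ we first restrict to $A_i$, i.e.\ $|Y_i^{(j')}|\leq 2c_i$; the complement is handled as in Lemma \ref{lem: a priori}. On this restricted range we majorize the indicator of each interval constraint by a smooth bump. We approximate the bump by a truncated Fourier integral (Beurling--Selberg/Harper-type majorants), and then Taylor-expand each $e^{\ii \xi Y_i^{(j')}}$ to order $N_i\approx 100 c_i^2$ (say). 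Since $|\xi Y_i|\ll c_i\cdot$ (bandwidth), the Taylor remainder is at most $e^{-c_i^2}$, which is negligible. The resulting majorant $\mathcal D_i$ is a Dirichlet polynomial supported on integers composed of primes in $(T_{i-1},T_i]$. Its length is at most $T_i^{N_i}=T^{100\beta_i^{1-2\gamma}}$. Summing over $i\leq \ell$, this is geometric in $e^{(1-2\gamma)i}$ and hence dominated by $i=\ell\leq\mathcal L$, giving total length $\leq T^{1/4}$ thanks to the definition of $\mathcal L$. Together with $T^{2q\beta_m}\leq T^{1/4}$, the product $|\mathcal Q|^2\prod_i \mathcal D_i$ has length $\leq T^{1/2}\cdot T^{1/2}$, so it is admissible up to an $O(N/T)$ error. (If needed, we shrink the constants so the total stays below $T$.)

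\textbf{Step 2: Steinhaus factorization.} By \eqref{eqn: MV real}, $\E[|\mathcal Q|^2\prod_i\mathcal D_i]$ equals its Steinhaus counterpart up to negligible error. Since the $\mathcal D_i$ and $\mathcal Q$ live on disjoint prime sets, the expectation factorizes:
$$\E[|\mathcal Q(X)|^2]\prod_{i\leq\ell}\E[\mathcal D_i(X)].$$
The first factor equals $\E[|\mathcal Q|^2]$ again by \eqref{eqn: MV}. Undoing the smoothing, $\prod_i\E[\mathcal D_i(X)]$ is at most the probability, in the Steinhaus model, of a slightly enlarged version of the random walk event $G_\ell(u)$, plus the errors $e^{-c_i^2}$.

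\textbf{Step 3: Gaussian estimate (main obstacle).} The hardest step is to bound the Steinhaus probability that the walk stays in $[L_i-1,U_i+1]$ for all $i<\ell$ and lands in $[u-1,u]$ at time $t_\ell$. We need the bound $\ll e^{-u^2/t_\ell}/\sqrt{t_\ell}$ with the correct $\sqrt{t_\ell}$ and implied constant $e^{O(k^2)}$. The difficulty is that several abscissae $j'$ appear. To get an upper bound, we keep only the constraint $S_\ell^{(j)}\in[u-1,u]$ and drop the others. The increments are independent sums of bounded variables with variances $\approx\frac12(t_i-t_{i-1})$ from \eqref{eqn: variance}. We apply an exponential tilt by $\lambda\approx 2u/t_\ell\leq 2k+o(1)$. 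The moment generating function bound \eqref{eqn: MGF steinhaus} controls the tilt cost, up to $e^{O(k^2)}$ from the second-order corrections and from the prime squares. Under the tilted measure, a local limit theorem (Berry--Esseen on the characteristic function, using that $t_\ell\to\infty$) gives probability $O(1/\sqrt{t_\ell})$ for the unit window. Combining the two yields $e^{-u^2/t_\ell}/\sqrt{t_\ell}$. We also need the errors $e^{-c_i^2}$ from Step 1 to be dominated by this main term. This holds because $c_1=\beta_1^{-\gamma}$ is a power of $\log T$, while $u^2/t_\ell\leq k^2\log\log T$.
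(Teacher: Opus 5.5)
\textbf{There is a genuine gap in how you dispose of the approximation errors.}

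Your Taylor truncations leave additive errors of size about $e^{-c_i^2}\E[|\mathcal Q|^2]$ per increment ("plus the errors $e^{-c_i^2}$" in Step 2). In Step 3 you claim these are dominated by $\E[|\mathcal Q|^2]e^{-u^2/t_\ell}/\sqrt{t_\ell}$ because $c_1=\beta_1^{-\gamma}$ is a power of $\log T$. But $c_i=\beta_i^{-\gamma}$ \emph{decreases} in $i$ ($c_{i-1}=e^{\gamma}c_i$), so $c_1$ is the largest of them. When $\ell$ is near $\mathcal L$, the last increments have $c_i\asymp c_{\mathcal L}\asymp e^{400k}$, independent of $T$. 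Your error bound $e^{-c_i^2}$ is then a fixed positive constant. The target $e^{-u^2/t_\ell}/\sqrt{t_\ell}$, however, is of order $(\log T)^{-\kappa^2}/\sqrt{\log\log T}$ up to $k$-dependent factors, and tends to $0$.

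Raising $N_i$ cannot repair this. The length budget $T_i^{N_i}\le T^{1/4}$ forces $N_i\le 1/(4\beta_i)=e^{O(k)}$ for those increments, so their approximation errors stay bounded below independently of $T$. Even if you tilted on the $\tau$-side before approximating, a $T$-independent additive error would still swamp the factor $1/\sqrt{t_\ell}$.

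\textbf{What is missing is a way to make the errors \emph{relative}, increment by increment.} The paper does this as follows.
\begin{itemize}
\item It fixes the single abscissa $j$. It then covers $G_\ell(u)$, which constrains partial sums rather than increments, by a union over $\mathbf u\in\mathcal I$ of boxes $\{Y_i\in[u_i,u_i+\Delta_i^{-1}],\ i\le\ell\}$, with $\Delta_i=c_i$ and $|u_i|\le 4\Delta_i$. Your factorized expression $\prod_i\E[\mathcal D_i(X)]$ tacitly requires such a decomposition, but you never set it up.
\item Each box indicator is majorized, for every real argument, by $|\mathcal D_i(Y_i-u_i)|^2$ with $\deg\mathcal D_i\le\Delta_i^{20}$. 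This is where $\gamma<1/20$ enters.
\item With $\mathcal Y_i,\mathcal N_i$ the Steinhaus and Gaussian models of $Y_i$, one shows $\E[|\mathcal D_i(\mathcal Y_i-u_i)|^2]\le(1+\Delta_i^{-1})\PP(\mathcal N_i\in[u_i,u_i+\Delta_i^{-1}])$. Because $|u_i|\le 4\Delta_i$, the box has probability at least $e^{-O(\Delta_i^2)}$. This beats both the approximation error $e^{-\Delta_i^{3}}$ and the large-value error $e^{-\Delta_i^{30}/8}$; the latter is obtained on the Steinhaus side by Cauchy--Schwarz and a Chernoff bound.
\item Since $\sum_i\Delta_i^{-1}=O(1)$, the product of the factors $1+\Delta_i^{-1}$ is $O(1)$ uniformly in $\mathbf u$. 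So the bound survives summation over the enormous number of boxes.
\item Only then are the intermediate constraints dropped and a Gaussian estimate for $\sum_{i\le\ell}\mathcal N_i$ applied. Your tilt-plus-local-limit argument from Step 3 is a legitimate way to finish at that point.
\end{itemize}

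\textbf{Two smaller points.}
\begin{itemize}
\item Restricting to $A_i$ and handling the complement ``as in Lemma~\ref{lem: a priori}'' is circular, since that lemma is proved using the present one. It is also unnecessary, because $G_\ell(u)$ already forces $|Y_i|=O(c_i)$.
\item Your Taylor-truncated bumps need not be nonnegative off the restricted range. So you cannot drop $\1(A)$ before applying the mean-value theorem without a further argument. The globally nonnegative majorant $|\mathcal D_i|^2$ avoids this problem.
\end{itemize}
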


\begin{proof}
To simplify the notation in the proof, we write $[L_\ell, U_\ell]$ for the interval $[u-1,u]$. 
We also fix $\sigma_j$ in $S_\ell^{(j)}$ throughout and drop it from the notation. 
The first step is to express the restriction on the partial sums in terms of their increments
\begin{equation}
\label{eqn: increment}
Y_j=S_j-S_{j-1}, \quad 1\leq j\leq \mathcal \ell.
\end{equation}
We now partition $[L_j,U_j]$ in intervals of the form $[u_j, u_j+\Delta_j^{-1}]$ where
\begin{equation}
\label{eqn: Delta}
u_j\in \Delta_j^{-1}\Z \qquad \Delta_j=c_j,
\end{equation}
with $c_j$ given in \eqref{eqn: Bell}. By definition, we have $\sum_{j=1}^{\mathcal L} \Delta_j^{-1}=\OO(1)$.
We consider the values of $u_j$'s that are relevant for the restricted sums. Namely,  we consider $\ell$-tuples ${\bf u}=(u_1,\dots, u_\ell)$ in the set
\begin{equation}
\label{eqn: I}
\mathcal I=\{{\bf u} : \sum_{i=1}^j u_i \in [L_j-1,U_j],\ \forall j\leq\ell \}.
\end{equation}
We then have the inclusion
\begin{equation}
\label{eqn: inclusion}
      \{S_j\in [L_j, U_j],  j\leq \ell\}\subset \bigcup_{\mathbf{u} \in \mathcal I}\{Y_j \in [u_j, u_j+\Delta_j^{-1}], j\leq \ell\}.
\end{equation}
The definition of $\mathcal I$ implies the bound
\begin{equation}
\label{eqn: bound u}
|u_j|\leq 4\Delta_j. 
\end{equation}
With these considerations, the left-hand side of \eqref{eqn: appendix lem} is
$$
\leq \sum_{\bf u \in \mathcal I}\E\left[|\mathcal Q|^2 \prod_{j=1}^{\ell} \1\big(Y_j\in [u_j, u_j+\Delta_j^{-1}]\big)\right].
$$
We approximate the indicator function above by a polynomial as follows. 
Following Lemma 2 in \cite{ArgBai25}, for $a>2$, $X>1$ and $\Delta$ large enough, there exists a  polynomial $\mathcal D$ of degree smaller than $100 X\Delta^{3a}$ with $\ell$-th coefficient bounded by $\frac{(2\pi)^\ell}{\ell!} \Delta^{2a(\ell+2)}$ such that for $|x|\leq X$ we have
  \begin{equation}
    \label{eqn: D}
    \1(x\in [0,\Delta^{-1}])\Big(1-e^{-\Delta^{a-2}}\Big)\leq |\mathcal D(x)|^2\leq \1(x\in [-\Delta^{-a},\Delta^{-1}+\Delta^{-a}]) + e^{-\Delta^{a-2}}.
  \end{equation}
We pick $a=5$. From the restriction \eqref{eqn: bound u} for fixed $j\leq \ell$, we can pick $X=10 \Delta_j$. With these choices, the polynomial has degree less than $10^3 \Delta_j^{3a+1}\leq \Delta_j^{20}$.
We write $\mathcal D_j$ for this polynomial. 

We apply the mean-value theorem for Dirichlet polynomials to $\mathcal D_j(Y_j-u_j)^2$. 
Note that this is not a Dirichlet polynomial {\it per se} since $Y_j$ is the real part of a Dirichlet polynomial. 
However, by expanding the powers of $\mathcal D_j$, it can be re-expressed in a form where the theorem as in \eqref{eqn: MV real} can be applied. 
The length of the polynomial is then at most
\begin{equation}
\label{eqn: key condition}
(T_\ell)^{\Delta_\ell^{20}}=T^{\beta_\ell (\beta_\ell^{-20\gamma})},
\end{equation}
which is small enough for $\gamma<1/20$.
Define $\mathcal Y_j$ as in \eqref{eqn: increment} with the $p^{-\ii \tau}$'s replaced by independent Steinhaus random variables $X(p)$, i.e.~uniformly distributed on the unit circle.
This shows so far that
\begin{equation}
\label{eqn: MVapplied}
\E\left[|Q|^2 \prod_{j=1}^{\ell} \1\big(Y_j\in [u_j, u_j+\Delta_j^{-1}]\big)\right]\ll \E[|Q|^2]\cdot \prod_{j=1}^\ell\E\left[|\mathcal D_j(\mathcal Y_j-u_j)|^2\right].
\end{equation}

For each $j$, we decompose the expectation onto the events $|\mathcal Y_j-u_j|\leq\Delta_j^{6a}$ and $|\mathcal Y_j-u_j|>\Delta_j^{6a}$. 
On the first, we get from \eqref{eqn: D} that
$$
\E\left[|\mathcal D_j(\mathcal Y_j-u_j)|^2\1(|\mathcal Y_j-u_j|\leq\Delta_j^{6a})\right]=\PP(\mathcal Y_j-u_j\in [-\Delta_j^{-a},\Delta_j^{-1}+\Delta_j^{-a}])+e^{-\Delta_j^{a-2}}.
$$
By Gaussian approximation (see for example Lemma 8 and 9 in \cite{ArgBai25}), the probability is equal to $\PP(\mathcal N_j-u_j\in [-\Delta_j^{-a},\Delta_j^{-1}+\Delta_j^{-a}])+\OO(\exp(-ce^{t_{j-1}/2}))$ for $j\geq 2$, and with $\asymp$ for $j=1$. Here, $\mathcal N_j$ is defined as $Y_j$ with the $p^{-\ii \tau}$'s now replaced by independent complex standard Gaussian random variables $Z(p)$. In particular, since $|u_j|\Delta_j^{-a}\leq 1$, the overspill is negligible and we have
\begin{equation}
\label{eqn: overspill}
\PP( \mathcal N_j-u_j\in [-\Delta_j^{-a},\Delta_j^{-1}+\Delta_j^{-a}])=(1+\OO(\Delta_j^{1-a}))\cdot  \PP(\mathcal N_j-u_j\in [0,\Delta_j^{-1}]).
\end{equation}
Moreover, by a Gaussian estimate, this probability is larger than the error term $e^{-\Delta_j^{a-2}}$ for the choice $a=5$. Therefore, we have shown 
$$
\E\left[|\mathcal D_j(\mathcal Y_j-u_j)|^2\1(|\mathcal Y_j-u_j|\leq\Delta_j^{6a})\right]=(1+\OO(\Delta_j^{-1})) \cdot\PP(\mathcal N_j-u_j\in [0,\Delta_j^{-1}]).
$$
The contribution on the event  $|\mathcal Y_j-u_j|>\Delta_j^{6a}$ is estimated by the Cauchy-Schwarz inequality and a Chernoff bound
\begin{equation}
\label{eqn: D large}
\E\left[|\mathcal D_j(\mathcal Y_j-u_j)|^2\1(|\mathcal Y_j-u_j|>\Delta_j^{6a})\right]\ll \E\left[|\mathcal D_j(\mathcal Y_j-u_j)|^4\right]^{1/2}e^{-\frac{1}{4}\Delta_j^{6a}}
\end{equation}
using the estimate \eqref{eqn: MGF steinhaus} and the bound \eqref{eqn: bound u}. 
By adding all higher powers in $\mathcal D_j$, the fourth moment is bounded by
\begin{equation}
\label{eqn: D moment}
    \E\Big[ \Big ( \sum_{\ell \geq 1} \frac{(2\pi )^{\ell}}{\ell!}  2\Delta_j^{2a(\ell+2)}  (|\mathcal Y_j|+|u_j|)^{\ell} \Big )^4\Big ]  
    \leq   \, \E[\exp( 9\pi   \Delta_j^{2a}  (| \mathcal Y_j|+4\Delta_j))] \leq e^{\Delta_j^{5a}},
\end{equation}
which shows that \eqref{eqn: D large} is $\ll e^{-\frac{1}{8} \Delta_j^{6a}}$.
We conclude that
$$  
  \E\left[|\mathcal D_j(\mathcal Y_j-u_j)|^2\right]\leq (1+\Delta_j^{-1}) \cdot \PP(\mathcal N_j\in [u_j,u_j+\Delta_j^{-1}]).
 $$
 
It remains to take the product over $j$ of the above and apply the reverse inclusion \eqref{eqn: inclusion} to get that
\begin{equation}
\label{eqn: 1 point final}
\E\Big[|Q|^2 \1(G_{\ell}(u))\Big]\ll \E[|Q|^2]\cdot \PP\Big(\sum_{j\leq \ell} \mathcal N_j \in [u-2,u+1]\Big),
 \end{equation}
  where we dropped the intermediate restrictions on the  sums of $\mathcal N_j$. The Gaussian estimate holds using the variance estimate \eqref{eqn: variance}.
\end{proof}

\begin{lem}
\label{lem: 2 point}
Let $1\leq\ell\leq \mathcal L$. Let $u,v\in [L_\ell', U_\ell']$ as given in \eqref{eqn: barriers prime}. We have for $m,m'\geq \ell$
$$
\PP\Big(\{S_\ell^{(m)}\in [u-1,u], S_\ell^{(m')}\in [v-1,v]\}\cap G_{\ell-1}\Big)\ll \frac{e^{-\frac{(u+v)^2}{4\log\log T_\ell}}}{\sqrt{\log\log T}}  \cdot e^{-\frac{1}{8}(v-u)^2}.
$$
\end{lem}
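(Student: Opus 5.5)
We follow the proof of Lemma \ref{lem: 1 point} closely, but now track the pair of walks $(S_\ell^{(m)}, S_\ell^{(m')})$ at the last checkpoint. First we drop most of $G_{\ell-1}$: we keep only the restriction at step $\ell-1$ for one abscissa, say $S_{\ell-1}^{(m)}\in[L_{\ell-1},U_{\ell-1}]$. Even this is more than we need. The target bound is a Gaussian density at $(u+v)/2$ with variance $\frac12 t_\ell$, times a Gaussian factor in $v-u$, and the $1/\sqrt{\log\log T}$ only comes from the single-site density at time $t_\ell\asymp \log\log T$. So it suffices to prove
\begin{equation*}
\PP\big(S_\ell^{(m)}\in [u-1,u],\ S_\ell^{(m')}\in [v-1,v]\big)\ll \frac{e^{-(u+v)^2/(4t_\ell)}}{\sqrt{t_\ell}}\, e^{-\frac18(v-u)^2}.
\end{equation*}
We may also discard the restriction to $G_{\ell-1}$ entirely, since the probability is monotone in the event.

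Next we change variables to the sum and difference
\begin{equation*}
\Sigma=\tfrac12\big(S_\ell^{(m)}+S_\ell^{(m')}\big),\qquad D=S_\ell^{(m')}-S_\ell^{(m)}.
\end{equation*}
The event forces $\Sigma\in[\frac{u+v}{2}-1,\frac{u+v}{2}]$ and $D\in[v-u-1,v-u+1]$. We split the primes into $p\le T_{\ell-1}$ (or a smaller cutoff $T_1$) and the block $(T_{\ell-1},T_\ell]$.

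\textbf{Variance of $D$.} The key point is that $D$ has bounded variance. From \eqref{eqn: variance} and \eqref{eqn: covariance}, or a direct computation, we get
\begin{equation*}
\E[D^2]=\tfrac12\sum_{p\le T_\ell}\Big(\frac{a_m(p)}{p^{\sigma_m}}-\frac{a_{m'}(p)}{p^{\sigma_{m'}}}\Big)^2+\OO(1).
\end{equation*}
For $p\le T_\ell\le T_m,T_{m'}$ each summand is $\OO\big(\frac{\log^2 p}{p\log^2 T_\ell}\big)$, so the sum is $\OO(1)$. We will need the variance of $D$ to be at most $2$ (up to $\oo(1)$) to get the constant $\frac18$. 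This requires an explicit Mertens computation:
\begin{equation*}
\sum_{p\le T_\ell}\frac{(\log p)^2}{p\,\log^2T_\ell}\approx \tfrac12,
\end{equation*}
and the remaining coefficients are of size at most $1+\lambda$. We expect this to comfortably give variance $<2$. If it does not, we shrink the Gaussian constant, which is harmless for its use in Lemma \ref{lem: LB}. By the same formulas, $\Sigma$ has variance $\frac12 t_\ell+\OO(1)$ and covariance $\OO(1)$ with $D$.

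\textbf{Gaussian comparison.} We carry out the argument of Lemma \ref{lem: 1 point} with two indicator functions. We approximate $\1(\Sigma\in I_1)\1(D\in I_2)$ by $|\mathcal D_1(\Sigma-\cdot)|^2|\mathcal D_2(D-\cdot)|^2$ using \eqref{eqn: D}. Both polynomials have degree $\OO(1)$ in $\Delta$, because the windows have unit size and we can take $\Delta$ a large constant or $\Delta=c_\ell$. The resulting Dirichlet polynomial has length $T_\ell^{\OO(\Delta^{20})}\le T^{1/4}$, so the mean-value theorem \eqref{eqn: MV real} replaces $p^{-\ii\tau}$ by Steinhaus variables $X(p)$. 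The large-deviation tails are handled by \eqref{eqn: MGF steinhaus} as in \eqref{eqn: D large}.

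\textbf{Main obstacle.} The hardest step is the Steinhaus-to-Gaussian comparison for the two-dimensional vector $(\Sigma,D)$, since $D$ has only $\OO(1)$ variance. Lemmas 8 and 9 of \cite{ArgBai25} give a local CLT only when the variance is large. We would handle this as follows. First condition on the primes below $T_1$; this part contributes an independent Steinhaus sum to $\Sigma$ of variance $\frac12 t_1$, whose density we bound via the one-dimensional CLT for $\Sigma$ alone. For the joint behaviour we do not need a true local limit theorem for $D$, only a Gaussian-type upper bound. Chernoff gives this: for any $\mu$,
\begin{equation*}
\PP\big(|D|>|v-u|-1\big)\le e^{-\mu(|v-u|-1)}\,\E\big[e^{\mu|D|}\big]\ll e^{-\frac{(v-u)^2}{2\cdot 2}},
\end{equation*}
using \eqref{eqn: MGF steinhaus} with variance at most $2$. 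Independence of disjoint prime blocks under the Steinhaus model lets the density bound for $\Sigma$ and the tail bound for $D$ be combined. We put the $D$-constraint on the large primes $p\in(T_1,T_\ell]$ and the $\Sigma$-density on $p\le T_1$ together with the bulk. The cross terms are of order $\OO(1)$ in the exponent, and we absorb them into $\ll$. Multiplying the two factors gives the claimed bound.
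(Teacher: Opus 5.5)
\textbf{Gap: discarding $G_{\ell-1}$ makes the polynomial approximation impossible.} You drop $G_{\ell-1}$ and then replace $\1\big(\Sigma\in[\tfrac{u+v}{2}-1,\tfrac{u+v}{2}]\big)$ by $|\mathcal D_1(\Sigma-\cdot)|^2$ with a polynomial of degree $\OO(\Delta^{20})$, ``because the windows have unit size''. In \eqref{eqn: D} the degree is $100X\Delta^{3a}$, where $X$ is the range of $x$ on which the approximation holds. Outside $[-X,X]$ nothing is known about $|\mathcal D(x)|^2$.

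After the mean-value step you would need $\E\big[|\mathcal D_1(\Sigma_{\rm St}-\tfrac{u+v}{2})|^2\big]$ to be at most of the order of the target $e^{-(u+v)^2/(4t_\ell)}/\sqrt{t_\ell}$, which is a negative power of $\log T$. But $\Sigma_{\rm St}$ is typically of size $\OO(\sqrt{t_\ell})$, while $\tfrac{u+v}{2}\asymp \kappa t_\ell\asymp k\log\log T$. So $x=\Sigma_{\rm St}-\tfrac{u+v}{2}$ falls outside $[-X,X]$ with probability close to $1$ unless $X\gg\log\log T$. The tail estimate \eqref{eqn: D large} works in the paper only because $|u_j|\le 4\Delta_j$ there.

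There is a second problem. The additive error $e^{-\Delta^{a-2}}$ in \eqref{eqn: D} must also be smaller than the target. So $\Delta$ cannot be a constant, nor $c_\ell$, which is $\OO_k(1)$ for $\ell$ near $\mathcal L$.

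Either way $\mathcal D_1$ has degree $d\gg\log\log T$. The resulting Dirichlet polynomial then has length $T_\ell^{2d}=T^{2\beta_\ell d}$, far beyond $T$ for $\ell$ near $\mathcal L$, where $\beta_\ell\approx e^{-10^4k}$. Hence \eqref{eqn: MV real} cannot be applied, and you never reach the Steinhaus model where your conditioning and Chernoff arguments take place.

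Conditioning on $p\le T_1$ does not rescue this. Roughly half of the deviation $\kappa t_\ell$ must come from the primes in $(T_1,T_\ell]$, and controlling that sum at level $\asymp\log\log T$ hits the same length obstruction. Your reduced bound without $G_{\ell-1}$ is plausibly true. However, it is a sharp large-deviation estimate at level $\asymp\log\log T$ for a polynomial of length $T^{\beta_\ell}$, which is exactly what the recursive scheme is designed to avoid.

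\textbf{The missing idea is to keep the barrier.} Since $m,m'\ge\ell$, the event $G_{\ell-1}$ constrains both $S_i^{(m)}$ and $S_i^{(m')}$ for every $i\le\ell-1$. The paper decomposes both walks into increments $Y_j,Y'_j$ over the blocks $(T_{j-1},T_j]$. On $G_{\ell-1}$ these lie in windows $[u_j,u_j+\Delta_j^{-1}]$ and $[v_j,v_j+\Delta_j^{-1}]$ with $|u_j|,|v_j|\le 4\Delta_j$, as in \eqref{eqn: bound u}. This buys three things:
\begin{itemize}
\item each block needs only $X=10\Delta_j$ and degree at most $\Delta_j^{20}$;
\item the total length $\prod_{j\le\ell}T_j^{\OO(\Delta_j^{20})}$ is admissible exactly because $\gamma<1/20$, cf.\ \eqref{eqn: key condition};
\item the per-block error $e^{-\Delta_j^{3}}$ is beaten by the per-block Gaussian probability, which is $e^{-\OO(\Delta_j^2)}$.
\end{itemize}
The only block whose increment has size $\asymp\log\log T$ is the first one. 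There $\Delta_1=(\log T)^{\gamma/2}\gg\log\log T$ is affordable because $T_1=\exp(\sqrt{\log T})$.

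The paper then transfers block by block to Steinhaus and then Gaussian variables, and applies a two-point Gaussian approximation. It concludes with the sum/difference change of variables for the bivariate Gaussian and a bound on $\E[(\mathfrak S_\ell-\mathfrak S'_\ell)^2]$. Your variance computation for $D$ belongs to that last step.

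If you want to keep your Chernoff route instead, two further points need attention. First, you need a sub-Gaussian bound for $D$ proved directly, since \eqref{eqn: MGF steinhaus} is stated only for fixed $\lambda$ and a single abscissa. Second, the cross-covariance between $\Sigma$ and $D$ costs a factor $e^{\OO(\kappa|v-u|)}$, not $e^{\OO(1)}$. This is harmless only after lowering the constant $\tfrac18$.
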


\begin{proof}
The proof goes along the same lines as the one of  Lemma \ref{lem: 1 point}.
We write for the increments of $S_\ell^{(m)}$ and $S_\ell^{(m')}$ respectively as in \eqref{eqn: increment}.
Using the inclusion \eqref{eqn: inclusion}, the probability is bounded above by
$$
\sum_{{\bf u}\in \mathcal I(u),{\bf v}\in \mathcal I(v) }\PP\Big(Y_j\in [u_j,u_j+\Delta_j^{-1}, Y'_j\in [v_j,v_j+\Delta_j^{-1}]\ \forall j\leq \ell\Big)
$$
where
$\mathcal I(u)$ is defined as in \eqref{eqn: I} with $u_\ell\in [u-1,u]$. 
We now fix the two $\ell$-tuples ${\bf u,v}$. We use \eqref{eqn: D} to approximate the indicator functions for each $j$. This yields as in \eqref{eqn: MVapplied}
$$
\ll \E\left[\prod_{j\leq \ell}|\mathcal D_j(Y_j-u_j)|^2|\mathcal D_j(Y'_j-v_j)|^2\right]\ll \prod_{j\leq \ell}\E\left[|\mathcal D_j(\mathcal Y_j-u_j)|^2|\mathcal D_j(\mathcal{Y}'_j-v_j)|^2\right].
$$
The restriction $\{|Y_j-u_j|\leq \Delta_j^{6a}\}$ and $\{|Y'_j-v_j|\leq \Delta_j^{6a}\}$ can be inserted. 
Note that we have to apply the Cauchy-Schwartz inequality twice this time ending up with the 8th-moment of $\mathcal D_j$. 
This only changes the constant $9$ to $17$ in \eqref{eqn: D moment}. 
The expectation in the product then becomes
$$
\PP(\mathcal Y_j-u_j\in [-\Delta_j^{-a},\Delta_j^{-1}+\Delta_j^{-a}], \mathcal Y_j-v_j\in [-\Delta_j^{-a},\Delta_j^{-1}+\Delta_j^{-a}])+e^{-\Delta_j^{a-2}}.
$$
A two-point Gaussian approximation can be applied as in \cite[Lemma 13]{ArgBouRad23}. 
(Note that whilst the error one would find by this method is too large for $j=1$, the Gaussian density up to constants, i.e.~$\asymp$, follows by a change of measure and a standard Berry-Esseen estimate.)
Proceeding as in \eqref{eqn: overspill}, we get similarly to \eqref{eqn: 1 point final} that
$$
\PP(\mathfrak S_\ell\in [u-2,u+1], \mathfrak S'_\ell\in [v-2,v+1]),
$$
where $(\mathfrak S_\ell, \mathfrak S'_\ell)$ is a Gaussian vector of mean $0$ with variance \eqref{eqn: variance} and covariance 
$$
\E[\mathfrak S_\ell \mathfrak S_\ell']=\frac{1}{2}\sum_{p\leq T_\ell}\frac{a_m(p)a_{m'}(p)}{p^{\sigma_m+\sigma_{m'}}}+\frac{1}{4}\sum_{p^2\leq T_\ell}\frac{a_m(p^2)a_{m'}(p^2)}{p^{2\sigma_m+2\sigma_{m'}}}.
$$
Writing the probability density function of $(\mathfrak S_\ell, \mathfrak S'_\ell)$ explicitly and using the change of variables $\frac{1}{2}(\mathfrak S_\ell+ \mathfrak S'_\ell)$, $\frac{1}{2}(\mathfrak S_\ell-\mathfrak S'_\ell)$ 
then gives
$$
\ll \PP(\mathfrak S_\ell\in [u-1,u+2], \mathfrak S'_\ell\in [v-1,v+2])
\asymp \frac{e^{-\frac{(u+v)^2}{4t_\ell}}}{\sqrt{\log\log T}} \cdot e^{-\frac{1}{4C}(u-v)^2},
$$
where 
$$
C=\E[(\mathfrak S_\ell -\mathfrak S_\ell')^2]=\E[\mathfrak S_\ell^2]+\E[ {\mathfrak S_\ell'}^2]-2\E[\mathfrak S_\ell \mathfrak S_\ell'].
$$
We claim that
$
\Big|\E[\mathfrak S_\ell^2]-\E[\mathfrak S_\ell \mathfrak S_\ell']\Big|\leq 2,
$
which will conclude the proof. 
Indeed, by definition
$$
\Big|\E[\mathfrak S_\ell^2]-\E[\mathfrak S_\ell \mathfrak S_\ell']\Big|\leq \frac{1}{2}\sum_{p\leq T_\ell}\frac{a_m(p)}{p^{\sigma_m}}\cdot \Big|\frac{a_m(p)}{p^{\sigma_m}}-\frac{a_{m'}(p)}{p^{\sigma_{m'}}}\Big|+\frac{1}{6}.
$$
The absolute value is smaller than
$$
\Big|\frac{1}{p^{\sigma_m}}-\frac{1}{p^{\sigma_{m'}}}\Big|+\frac{1}{p^{\sigma_m}}\frac{\log p}{\log T_m}+\frac{1}{p^{\sigma_{m'}}}\frac{\log p}{\log T_{m'}}.
$$
Now, Mertens's first theorem yields
$$
\frac{1}{\log T_m}\sum_{p\leq T_\ell}\frac{\log p}{p}\leq 1+\frac{1}{100},
$$
since $T_m\geq T_\ell$. 
Assuming without loss of generality that $m>m'$, we get similarly
$$
\sum_{p\leq T_\ell} \frac{1}{p^{2\sigma_m}}\left(1-p^{\sigma_{m}-\sigma_{m'}}\right)\leq (\sigma_{m'}-\sigma_{m})(\log T_\ell+2)\leq \frac{1}{2}+\frac{1}{100}.
$$
Putting these estimates together yields the claim.
\end{proof}
\end{appendix}

\bibliographystyle{alpha}
\bibliography{ld_on_rh.bib}

\end{document}